\theoremstyle{plain}
\newtheorem{theo}{Theorem}
\newtheorem{prop}{Proposition}
\newtheorem{lemm}{Lemma}
\newtheorem{rema}{Remark}
\newtheorem{defi}{Definition}
\newtheorem{exam}{Example}
\newcommand\N{{\mathbb{N}}}
\newcommand\R{{\mathbb{R}}}
\newcommand\C{{\mathbb{C}}}
\newcommand{\K}{\mathcal{K}}
\newcommand{\F}{\mathcal{F}}
\newcommand{\T}{\mathcal{T}}
\renewcommand{\i}{{\rm i}}
\newcommand{\x}{\mathbf{x}}
\renewcommand{\d}{{\rm d}}
\newcommand{\supp}{ { \rm{supp} } }
\renewcommand\T{ T }
\renewcommand\t{ \tau }
\begin{document}

\title{A representation formula for solutions of second order ode's with time dependent coefficients 
and its application to model dissipative oscillations and waves}


\author{ Richard Kowar\\
Department of Mathematics, University of Innsbruck, \\
Technikerstrasse 21a, A-6020, Innsbruck, Austria
}

\maketitle

\begin{abstract}
In this paper, we model, classify and investigate the solutions of (normalized) second order ode's with 
\emph{nonconstant continuous coefficients}. We introduce a generalized \emph{frequency function} as the 
solution of a \emph{nonlinear integro-differential equation}, show its existence and then derive a 
representation formula for (all) solutions of (normalized) second order ode's with \emph{nonconstant 
continuous coefficients}. Because this formula specifies the interplay between the coefficients of the 
ode, the \emph{relaxation function} ("strongly" decreasing positive function) and the frequency function 
of the oscillation, it can be applied to design models of dissipative oscillations. As an application, 
we present and discuss some oscillation models that stop within a finite time period. 
Moreover, we demonstrate that a large class of oscillations can be used to design and analyze dissipative 
waves. In particular, it is easy to model dissipative waves that cause in each point of space an oscillation 
that stops after a finite time period. 
\end{abstract}

\section{Introduction}

The first main result of this paper is the derivation of a \emph{representation formula} for solutions of (normalized) second order ode's with \emph{time dependent coefficients} $a$ and $b$, 
i.e. solutions of 
\begin{equation}\label{2orderode}
\begin{aligned}
    v'' + a\,v' + b\,v = f_0 \quad\mbox{on}\quad (0,\T) \quad\mbox{with} \quad
    v(0) = \varphi \quad\mbox{and}\quad v'(0) = \psi \,,
\end{aligned}
\end{equation}
where $\T\in (0,\infty]$,  $a,\,b:[0,\T)\to\R$ are continuous functions, $f_0$ is continuous satisfying 
$\supp(f_0)\subset (0,\T)$ and $\varphi,\,\psi\in\R$. 
For this purpose, we show that there exists a differentiable \emph{frequency function} $\omega:[0,\T) \to\R$ 
satisfying a \emph{nonlinear integro-differential equation} such that the 
solution of~(\ref{2orderode}) (with $f_0=0$) reads as follows 
\begin{equation}\label{GenSolOscill}
   v(t) = \left[ \varphi\,\cos\left( \tilde\omega(t)\right) 
                 + \left( \psi + \alpha(0)\,\varphi\right)\,
                 \frac{\sin\left( \tilde\omega(t)\right)}{\omega(0)}          
          \right]\,\varrho(t) \,,
\end{equation}
where 
\begin{equation}\label{deftildeop}
\begin{aligned}
   &\alpha:= \frac{a}{2} + \frac{\omega'}{2\,\omega}  \,,
              \quad
    \varrho := \exp\left( -\tilde\alpha \right)\,H
               \quad\mbox{and}  \\
  &\tilde \,\quad \mbox{ is the operator defined by} \quad \tilde g(t) := \int_0^t g(s) \,\d s 
\end{aligned}
\end{equation}
for $t\in [0,\T)$ and integrable $g$; $\omega'$ and $H$ denote time derivative of $\omega$ and the 
\emph{Heaviside} function, respectively. We call $\varrho$ and $\alpha$ the \emph{relaxation function} and 
the \emph{dissipation} function, respectively.  
It turns out, if $a$ and $b$ are continuous on $[0,\T)$, then the existence of this frequency function 
is always guaranteed. If $\omega'$ does not vanish, then the \emph{dissipation} $\alpha$ (of the oscillation) 
also depends on the \emph{frequency} $\omega$ and not only on the \emph{damping coefficient} $\alpha$.  
We note that there also exists a formula for $f_0 \not= 0$ (cf. Theorem~\ref{theo:oscillmainb} below). 
This representation formula is a useful tool for designing and analyzing all types 
of dissipative oscillations. Indeed, from~(\ref{GenSolOscill}), ~(\ref{deftildeop}) and the equation 
for $\omega$ (cf. (\ref{defomega}) below), we see the interplay between the coefficients $a$ and $b$ in 
the ode and the relaxation function $\varrho$ and the frequency function $\omega$ of the oscillation. 
We conclude our analysis of~(\ref{2orderode}) with examples in which we design and discuss 
oscillations that stop within a finite time period.

In the second part of this paper, we demonstrate that special semigroups of oscillations can be used 
to design and analyze dissipative waves. 
For example, a spherical dissipative pressure wave can be modeled by (cf.~\cite{KoSc12}) 
\begin{equation}\label{modelG}
      G(x,t) 
          := \frac{\K_{|x|}\left( t-\frac{|x|}{c_0} \right)}{4\,\pi\,|x|} \qquad\quad \mbox{($c_0$ constant)}\, 
\end{equation}
where 
\begin{equation}\label{modelKR}
     \F(\K_R)(\omega) := e^{-\alpha(\omega)\,R} \qquad\mbox{ for } \quad \mbox{$\omega\in\R$ and $R\geq 0$.}
\end{equation}
For basic facts about dissipative waves we refer for example 
to~\cite{NacSmiWaa90,Sz94,Sz95,KiFrCoSa00,WaHuBrMi00,We00,HanSer03,CheHolm04,WaMoMi05,PatGre06,KoSc12}. 
It seems evident that the elongation, caused by the spherical wave $G$ in a space point $x_0$ 
with $|x_0|=1$, corresponds to an "oscillation" $t\mapsto \frac{\K_1\left( t-\frac{1}{c_0} \right)}{4\,\pi}$ 
that starts at time $\frac{1}{c_0}$ and ends at time $\T_1 :=\frac{1}{c_0} + \T$ for some 
$\T\in (0,\infty]$. 
Actually, due to $G(0,t) = \F^{-1}(1) = \delta(t)$, $\K_R$ may have a pole at $t=0$ for sufficiently small 
$R>0$, i.e. $\K_R$ may not be a proper oscillation for small $R>0$ (cf. Example~\ref{exam:Instr}).\footnote{It 
seems not reasonable that $t\mapsto G(x_0,t)$ has a pole for each $x_0\in\R^3$, but we do not investigate that 
issue in this paper. } 
At any rate, it stands to reason that there exists a class of oscillations, say $\{\K_*\in \mathcal{C}_{osc}\}$, 
that determines a class of dissipation laws $\{\alpha_* \in \mathcal{C}_{diss}\}$ for waves, 
due to
\begin{equation}\label{ReqForm}
     \F(\K_*) = e^{-\alpha_*\,|x_0|}\,  \qquad\mbox{for some}\qquad |x_0| = R_0 >0\,.
\end{equation}
Here $R_0$ may be different for different $\K_*$. If $\K_*$ or $\alpha$ is given, then the associated 
\emph{semigroup of oscillations} is defined by 
$$
    \left( \F^{-1} \left(\F(\K_*)^\frac{R}{R_0}\right) \right)_{R>0} 
       \quad\mbox{or equivalently by} \quad 
    \left( \F^{-1} \left(e^{-\alpha\,\frac{R}{R_0}}\right) \right)_{R>0}\,.
$$
We investigate the properties that an oscillation $\K_*$ must satisfy such that~(\ref{ReqForm}) is satisfied 
for $R_0=1$ and present some examples. \\

This paper is organized as follows: In Section~\ref{sec-GenOscill} we introduce the concept of frequency 
function related to~(\ref{2orderode}) and show its existence under fairly weak assumptions. Then we tackle 
the representation formula and the classification of general dissipative oscillations. This section is concluded 
with some examples. In Section~\ref{sec-DissWaves}, we investigate sufficient conditions for the density and 
the frequency function such that the respective oscillation determines a dissipation law for dissipative waves.  
In the appendix, we summarize some notations about the Fourier transform and present a small 
application of the Paley-Wiener-Schwartz Theorem that is used in our analysis. 
Finally, the paper is closed with the section Conclusion.


\section{Derivation of the representation formular and its application}
\label{sec-GenOscill}

In this section, we introduce the concept of a \emph{frequency function} $\omega:(0,\T)\to \R \cup
 \i\R$ 
with $\T\in (0,\infty]$ (and its extension to the real line), which generalizes the frequency and 
which is a solution of an \emph{integro differential equation}. 
Then we show its existence and derive the general representation formula for dissipative oscillations 
that make use of the frequency function. Moreover, we give a \emph{classification} of damped oscillations 
and apply the formula to model dissipative oscillations that stop at a finite time 
$\T>0$. 

According to Chapter VIII in~\cite{He91}, the second order ode with \emph{time dependent} 
coefficients~(\ref{2orderode}) has always a twice differentiable solution if $a$ and $b$ are continuous on 
$(0,\T)$. Here $\T\in (0,\infty]$ denotes the \emph{stopping time} of the oscillation, which may be finite 
or infinite.

\subsection{The frequency function and the solution formula}

\begin{defi}\label{defi:frequency}
Let $\T\in (0,\infty]$ and $a,\,b:[0,\T)\to [0,\infty)$ be continuous. If there exists a 
differentiable function $\omega:(0,\T)\to \R \cup \i\R$ satisfying 
\begin{equation}\label{defomega}
\begin{aligned}
    \int_0^t \omega^2(s) \,\d s 
          &= \int_0^t \left[ b(s) 
                              - \left( \frac{a(s)}{2} \right)^2 
                              + \left( \frac{\omega'}{2\,\omega}\right)^2(s)\right]\,\d s \\
          &\quad  - \frac{a(t)}{2} + \frac{a(0)}{2} 
                  - \frac{\omega'}{2\,\omega} (t) + \frac{\omega'}{2\,\omega}(0) \,,
\end{aligned}
\end{equation}
then we call $\omega$ the frequency function of the process $v$ modeled by~(\ref{2orderode}). 
Moreover, we define $\omega(0) := \omega(0+)$ and $\omega(\T) := \omega(\T-)$. \\
For applications involving the Fourier transform, it is convenient to extend $\omega$ to the real line by 
$\omega(t) = \omega(\T)$ for $t>\T$ and $\omega(t):=\omega(-t)$ for $t<0$.
\end{defi}

If the coefficient $a$ is differentiable, then the condition for the frequency function~(\ref{defomega}) simplifies to 
\begin{equation}\label{defomega1}
      \omega^2 
         - \left( \frac{\omega'}{2\,\omega} \right)^2
         + \left( \frac{\omega'}{2\,\omega} \right)'
         =  b
                - \left( \frac{a}{2} \right)^2 
                - \frac{a'}{2}  \,,
\end{equation}
and for the special case $\omega'=0$, we arrive at 
$$
    \omega(t) =  \sqrt{ b(t) - \left( \frac{a(t)}{2} \right)^2 - \frac{a'(t)}{2} } = \omega(0) 
        \qquad\mbox{for}\qquad  t\in(0,\T)\,.
$$

In the following Lemma and Theorem, we show that a \emph{frequency function} exists if the coefficients $a$ and 
$b$ are contiunous and prove the claimed representation formula for the solution of problem~(\ref{2orderode}).

\begin{lemm}\label{lemm:oscillmain}
Let $\omega_0 >0$, $\omega_1 \in \R$, $a,\,b:[0,\T)\to \R$ be continuous,   
\begin{itemize}
\item $v_1$ denote the solution of~(\ref{2orderode}) with $\varphi := 0$ and $\psi := 1$, 

\item $v_2$ denote the solution of~(\ref{2orderode}) with $\varphi := \frac{1}{\omega_0}$ 
      and~\footnote{Cf. Formula in Theorem~\ref{theo:oscillmain} below.} 
      $\psi := -\frac{1}{\omega_0}\, \left(\frac{a(0)}{2}+\frac{\omega_1}{2\,\omega_0}\right)$. 
\end{itemize} 
Then $v_1$ and $v_2$ exist and are twice continuously differentiable on $(0,\T)$. 
If $v_1^2(0) + v_2^2(0)>0$, then 
\begin{equation}\label{ansatzomega1}
    \omega(t) := \frac{v_1'(t)\,v_2(t) - v_1(t)\,v_2'(t)}{v_1^2(t) + v_2^2(t)} 
       \qquad\mbox{for}\qquad t\in I_\omega\,
\end{equation}
is a real valued frequency function on $I_\omega := \{t\in [0,\T)\,|\, v_1^2(t) + v_2^2(t) \not= 0\}$ and 
satisfies $\omega(0)=\omega_0>0$ and $\omega'(0)=\omega_1$. If $v_2^2(0) - v_1^2(0)>0$, then
\begin{equation}\label{ansatzomega2}
   \i\,\epsilon(t) :=  \omega(t) := \i\,\frac{v_1'(t)\,v_2(t) - v_1(t)\,v_2'(t)}{v_2^2(t) - v_1^2(t)} 
       \qquad\mbox{for}\qquad t\in I_\epsilon\,
\end{equation}
is an imaginary valued frequency function on $I_\epsilon := \{t\in [0,\T)\,|\, v_2^2(t) - v_1^2(t) \not= 0\}$ 
and $\epsilon(0)=\omega_0>0$ and $\epsilon'(0)=\omega_1$. 
\end{lemm}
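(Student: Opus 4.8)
The plan is to treat the explicit pair $v_1,v_2$ as a fundamental system and to view~(\ref{ansatzomega1}) as the derivative of the polar \emph{phase} of $(v_1,v_2)$, so that the frequency equation~(\ref{defomega}) becomes an integrated form of the second order equation satisfied by the polar \emph{amplitude} $\rho:=\sqrt{v_1^2+v_2^2}$. First I would invoke the linear theory quoted above (Chapter VIII in~\cite{He91}): since~(\ref{2orderode}) with $f_0=0$ reads $v''=-a\,v'-b\,v$ with continuous right-hand side, $v_1$ and $v_2$ exist, are unique, and are twice continuously differentiable on $(0,\T)$.

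The key preliminary object is the Wronskian $W:=v_1'\,v_2-v_1\,v_2'$. Substituting the ode into $W'$ gives Abel's identity $W'=-a\,W$, hence $W(t)=W(0)\,\exp(-\tilde a(t))$ with $W(0)=v_1'(0)\,v_2(0)-v_1(0)\,v_2'(0)=\tfrac{1}{\omega_0}\neq 0$. Thus $W$ never vanishes, so $\omega=W/(v_1^2+v_2^2)$ is well defined, of constant sign and nowhere zero on $I_\omega$; in particular $\omega'/(2\,\omega)$ is meaningful there and the right-hand side of~(\ref{defomega}) makes sense. Since $\rho$ is $C^2$ and positive on $I_\omega$, a direct computation using the ode together with Lagrange's identity $(v_1^2+v_2^2)\big((v_1')^2+(v_2')^2\big)-(v_1\,v_1'+v_2\,v_2')^2=W^2$ yields the amplitude equation
\begin{equation*}
    \rho''+a\,\rho'+(b-\omega^2)\,\rho=0 \,,
\end{equation*}
equivalently $\omega^2=b+\rho''/\rho+a\,\rho'/\rho$. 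Moreover, differentiating $\omega=W/\rho^2$ and using $W'=-a\,W$ gives $\omega'/\omega=-a-(\rho^2)'/\rho^2$, that is, the dissipation identity $\alpha=\tfrac{a}{2}+\tfrac{\omega'}{2\,\omega}=-\rho'/\rho$ of~(\ref{deftildeop}).

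The heart of the proof is to integrate the amplitude equation so as to recover~(\ref{defomega}) \emph{without ever differentiating} $a$; this is essential, since $a$ is only continuous and the pointwise form~(\ref{defomega1}), which contains $a'$, is unavailable. Writing $\int_0^t\omega^2=\int_0^t b+\int_0^t\rho''/\rho+\int_0^t a\,\rho'/\rho$ and treating the middle term via $\rho''/\rho=(\rho'/\rho)'+(\rho'/\rho)^2$, the exact part integrates to the boundary value $[\rho'/\rho]_0^t=-\alpha(t)+\alpha(0)=-\tfrac{a(t)}{2}+\tfrac{a(0)}{2}-\tfrac{\omega'}{2\,\omega}(t)+\tfrac{\omega'}{2\,\omega}(0)$, reproducing the non-integral terms of~(\ref{defomega}). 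Substituting $\rho'/\rho=-(\tfrac{a}{2}+\beta)$ with $\beta:=\omega'/(2\,\omega)$ into the remaining integrands cancels the cross terms $\int a\,\beta$ and combines the quadratic terms into $\int_0^t\big(\beta^2-(a/2)^2\big)$, which together with $\int_0^t b$ gives exactly the integral part of~(\ref{defomega}). The initial conditions are then immediate: $\omega(0)=W(0)/\rho^2(0)=\omega_0$, and evaluating $\omega'=-a\,\omega-\omega\,(\rho^2)'/\rho^2$ at $t=0$ with the prescribed $v_2'(0)=-\tfrac{1}{\omega_0}\big(\tfrac{a(0)}{2}+\tfrac{\omega_1}{2\,\omega_0}\big)$ yields $\omega'(0)=\omega_1$ --- indeed this choice of $\psi$ is tailored precisely to enforce $\omega'(0)=\omega_1$.

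For the imaginary-valued case~(\ref{ansatzomega2}) I would run the same argument with the hyperbolic amplitude $\sigma:=\sqrt{v_2^2-v_1^2}$ (positive on $I_\epsilon$) and $\epsilon=W/\sigma^2$, $\omega=\i\,\epsilon$. The corresponding amplitude equation is $\sigma''+a\,\sigma'+(b+\epsilon^2)\,\sigma=0$, so $\omega^2=-\epsilon^2=b+\sigma''/\sigma+a\,\sigma'/\sigma$ has exactly the same shape as before, and the identical integration delivers~(\ref{defomega}), with $\epsilon(0)=\omega_0$ and $\epsilon'(0)=\omega_1$ obtained as above. The only genuine obstacle is the bookkeeping in this integration step: once $\rho''/\rho$ is split into an exact derivative plus $(\rho'/\rho)^2$ and the relation $\rho'/\rho=-(\tfrac{a}{2}+\beta)$ is inserted, the cancellation of the $a$-terms is forced, and the mere continuity of $a$ causes no difficulty.
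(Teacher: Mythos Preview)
Your argument is correct. It is also genuinely different from the paper's route and, in one respect, cleaner.

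The paper computes $\omega'/(2\omega)$ directly from~(\ref{ansatzomega1}) and then verifies the \emph{differentiated} identity
\[
\omega^2-\Big(\tfrac{\omega'}{2\omega}\Big)^2+\Big(\tfrac{\omega'}{2\omega}\Big)'=b-\Big(\tfrac{a}{2}\Big)^2-\tfrac{a'}{2}
\]
by brute force, declaring at the outset that $a'$ is to be read as a distributional derivative; this distributional reading is what makes the pointwise identity equivalent to the integrated form~(\ref{defomega}). For the imaginary case the paper observes that~(\ref{ansatzomega2}) arises from~(\ref{ansatzomega1}) by replacing $v_1$ with $\i\,v_1$, which still solves the ode, so the same computation applies verbatim. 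You instead introduce the Pr\"ufer-type amplitude $\rho=\sqrt{v_1^2+v_2^2}$ (resp.\ $\sigma=\sqrt{v_2^2-v_1^2}$), derive the amplitude equation $\rho''+a\,\rho'+(b-\omega^2)\,\rho=0$ via the Lagrange identity and Abel's formula, and then integrate using $\rho''/\rho=(\rho'/\rho)'+(\rho'/\rho)^2$ together with $\rho'/\rho=-\tfrac{a}{2}-\tfrac{\omega'}{2\omega}$. This recovers~(\ref{defomega}) directly in its integrated form, so you never need $a'$ in any sense; your approach thus honours the continuity-only hypothesis on $a$ without invoking distributions. What the paper's approach buys is brevity and the elegant $v_1\mapsto\i\,v_1$ shortcut for case b); what yours buys is a transparent geometric picture ($\omega$ as angular velocity of the phase point, $\varrho=\rho(0)/\rho$ from~(\ref{deftildeop})) and a proof that stays entirely within classical calculus.
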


\begin{proof}
Because of the continuity of $a$ and $b$, the solutions $v_1$ and $v_2$ exist and are twice continuously 
differentiable and real valued. In this proof, we denote the distributive derivative of a continuous function 
$f$ by $f'$. Then the proof is more instructive. \\
a) Let $v_1^2(0) + v_2^2(0)>0$. We have to show that 
$$
  X_\omega :=
    \omega^2 - \left( \frac{\omega'}{2\,\omega} \right)^2 + \left( \frac{\omega'}{2\,\omega} \right)' 
       - b + \left(\frac{a}{2}\right)^2 + \frac{a'}{2} 
   = 0 \qquad\mbox{on} \qquad I_\omega\,,
$$
From~(\ref{ansatzomega1}) together with $v_j'' = -a\,v_j' - b\,v_j$ for $j=1,\,2$, we obtain 
$$
   \frac{\omega'}{2\,\omega} = - \frac{v_1\,v_1'+v_2\,v_2'}{v_1^2+v_2^2} - \frac{a}{2}\,,
$$
which implies 
$$
   \left( \frac{\omega'}{2\,\omega} \right)^2 
          = \frac{a^2}{4} 
            + a\,\frac{v_1\,v_1'+v_2\,v_2'}{v_1^2+v_2^2} 
            + \frac{(v_1\,v_1'+v_2\,v_2')^2}{(v_1^2+v_2^2)^2}
$$
and 
$$
  \left( \frac{\omega'}{2\,\omega} \right)' 
          = b 
             - \omega^2 
             - \frac{a'}{2} 
             + a\,\frac{v_1\,v_1' + v_2\,v_2'}{v_1^2+v_2^2} 
             + \frac{(v_1\,v_1' + v_2\,v_2')^2}{v_1^2+v_2^2} \,.
$$
Consequently,
$$
\left( \frac{\omega'}{2\,\omega} \right)'-\left( \frac{\omega'}{2\,\omega} \right)^2 
          = b - \omega^2 - \left(\frac{a}{2}\right)^2 - \frac{a'}{2}
$$
and  therefore  
\begin{equation}\label{Xomega}
\begin{aligned}
   X_\omega = 0   \qquad\mbox{on} \qquad I_\omega\,.
\end{aligned}
\end{equation} 
From the definition of $v_1$ and $v_2$, it follows that 
$v_1(0+) = 0$, $v_1'(0+) = 1$, $v_2(0+) = \frac{1}{\omega_0} \not=0$ and 
$v_2'(0+) = -\frac{1}{\omega_0}\, \left(\frac{a(0)}{2}+\frac{\omega_1}{2\,\omega_0}\right)$ and therefore 
$$ 
        \omega(0+) = \omega_0 > 0  \qquad\mbox{and}\qquad    \omega'(0+) = \omega_1\,.
$$  
This concludes item a). \\
b) Now let $v_2^2(0) - v_1^2(0)>0$. We note that $\epsilon$ in~(\ref{ansatzomega2}) can be obtained 
from~(\ref{ansatzomega1}) if $v_1$ is replaced by $\i\,v_1$. Because $v_1$ satisfies equation 
$w'' + a\,w' + b\,w =0$, $\i\,v_1$ satisfies the same equation and thus identity~(\ref{Xomega}) is also true 
for this case. 
Similar as in Item a), the definition of $v_1$ and $v_2$ imply 
$$ 
       \i\,\epsilon(0+)   = \omega(0+) =  \i\,\omega_0 > 0  \qquad\mbox{and}\qquad   
      (\i\,\epsilon)'(0+) = \omega'(0+) = \i\,\omega_1\,,
$$
which concludes the proof. 
\end{proof}

\begin{theo}\label{theo:oscillmain}
Let $\varphi,\,\psi\in\R$, $a,\,b:[0,\T)\to \R$ be continuous and $\omega$ be as in Lemma~\ref{lemm:oscillmain} 
for given $\omega_0>0$ and $\omega_1\in\R$. Then the frequency function $\omega$ is well-defined on $[0,\T)$ 
with $\omega(0)=\omega_0$ and $\omega'(0)=\omega_1$, and the solution of~(\ref{2orderode}) is given 
by~(\ref{GenSolOscill}) with $\alpha$ and $\varrho$ defined by~(\ref{deftildeop}).  
\end{theo}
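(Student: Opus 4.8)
The plan is to leverage the fundamental system $v_1,v_2$ produced by Lemma~\ref{lemm:oscillmain} and to recognize~(\ref{GenSolOscill}) as nothing but these two solutions written in ``polar form'', after which the general solution follows by matching the initial data. First I would record that, since $v_1,v_2$ solve $w''+a\,w'+b\,w=0$, their Wronskian $W:=v_1'\,v_2-v_1\,v_2'$ obeys Abel's identity $W'=-a\,W$, so that $W(t)=W(0)\,e^{-\tilde a(t)}=\tfrac{1}{\omega_0}\,e^{-\tilde a(t)}$, using $v_1(0)=0$, $v_1'(0)=1$, $v_2(0)=\tfrac{1}{\omega_0}$. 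In particular $W$ never vanishes, hence $v_1,v_2$ are linearly independent and $v_1^2+v_2^2>0$ on all of $[0,\T)$; this already gives $I_\omega=[0,\T)$ and therefore that $\omega$ in~(\ref{ansatzomega1}) is well-defined on the whole interval, with $\omega(0)=\omega_0$ and $\omega'(0)=\omega_1$ as established in the Lemma.

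Next I would set $r:=\sqrt{v_1^2+v_2^2}>0$ and write $v_1=r\,\sin\theta$, $v_2=r\,\cos\theta$ for the continuous angle function $\theta$ normalized by $\theta(0)=0$, which is well-defined precisely because the planar curve $(v_1,v_2)$ never meets the origin. Differentiating $\tan\theta=v_1/v_2$ yields $\theta'=\tfrac{v_1'\,v_2-v_1\,v_2'}{v_1^2+v_2^2}=\tfrac{W}{r^2}=\omega$ by the very definition~(\ref{ansatzomega1}), so $\theta=\tilde\omega$. For the radius, $\tfrac{r'}{r}=\tfrac{v_1\,v_1'+v_2\,v_2'}{v_1^2+v_2^2}$, which by the identity $\tfrac{\omega'}{2\,\omega}=-\tfrac{v_1\,v_1'+v_2\,v_2'}{v_1^2+v_2^2}-\tfrac{a}{2}$ established inside the proof of Lemma~\ref{lemm:oscillmain} equals $-\alpha$, with $\alpha$ as in~(\ref{deftildeop}). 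Hence $r=r(0)\,e^{-\tilde\alpha}=\tfrac{1}{\omega_0}\,\varrho$ on $(0,\T)$, since $r(0)=\tfrac{1}{\omega_0}$, and I obtain the two key representations
\begin{equation*}
  v_1=\frac{\sin(\tilde\omega)}{\omega_0}\,\varrho,\qquad
  v_2=\frac{\cos(\tilde\omega)}{\omega_0}\,\varrho .
\end{equation*}

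Finally, as $v_1,v_2$ form a fundamental system, the solution of~(\ref{2orderode}) with $f_0=0$ is $v=c_1\,v_1+c_2\,v_2$; imposing $v(0)=\varphi$ and $v'(0)=\psi$ and using $v_1(0)=0$, $v_1'(0)=1$, $v_2(0)=\tfrac{1}{\omega_0}$, $v_2'(0)=-\tfrac{\alpha(0)}{\omega_0}$ (the last because $\alpha(0)=\tfrac{a(0)}{2}+\tfrac{\omega_1}{2\,\omega_0}$) gives $c_2=\omega_0\,\varphi$ and $c_1=\psi+\alpha(0)\,\varphi$. Substituting the two representations then reproduces~(\ref{GenSolOscill}) verbatim. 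The imaginary-valued case~(\ref{ansatzomega2}) is absorbed by the substitution $v_1\mapsto\i\,v_1$ used in part b) of the Lemma: the same computation runs with $\omega=\i\,\epsilon$, and since $\cos(\tilde\omega)=\cosh(\tilde\epsilon)$ and $\tfrac{\sin(\tilde\omega)}{\omega(0)}=\tfrac{\sinh(\tilde\epsilon)}{\epsilon(0)}$, formula~(\ref{GenSolOscill}) stays real-valued and valid.

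I expect the main obstacle to be the global construction of the angle $\theta$ together with the verification that $\theta'=\omega$ persists through the points where $v_2=0$: there $\tan\theta$ is singular, so one must argue via the continuous lift of the nonvanishing curve $(v_1,v_2)$ (or patch locally with $\cot\theta=v_2/v_1$) and invoke the continuity of $\omega$ to extend the derivative identity across such points. Once the polar form is secured, the remaining Wronskian and initial-value computations are routine.
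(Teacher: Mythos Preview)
Your argument is correct for the real-valued branch and takes a genuinely different route from the paper. The paper works via the characteristic equation: setting $\lambda=\alpha\pm \i\,\omega$, it verifies from~(\ref{defomega}) that $\exp(-\tilde\lambda)$ solve the ODE, then matches initial data; well-definedness of $\omega$ on all of $[0,\T)$ is obtained somewhat circularly by reading $v_1^2+v_2^2=\varrho^2/\omega_0^2>0$ off the very formula just derived. You instead use Abel's identity to get $W=\tfrac{1}{\omega_0}e^{-\tilde a}\neq 0$ directly, which forces $(v_1,v_2)\neq(0,0)$ and makes the polar decomposition global from the outset; identifying $\theta'=\omega$ and $r'/r=-\alpha$ straight from~(\ref{ansatzomega1}) and the computation of $\omega'/(2\omega)$ inside the Lemma is more geometric and never appeals to the integro-differential relation~(\ref{defomega}) itself. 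This is cleaner, and the obstacle you anticipate (lifting $\theta$ through zeros of $v_2$) is harmless, since a nowhere-vanishing $C^1$ planar curve always admits a $C^1$ argument.

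One small gap remains in your treatment of the imaginary branch. The Wronskian excludes $v_1=v_2=0$ but not $v_1=\pm v_2$, so it yields $I_\omega=[0,\T)$ yet says nothing about $I_\epsilon$; hence ``the same computation runs with $\omega=\i\,\epsilon$'' is not yet justified. You can close this either by observing that the real-valued $\omega$ of~(\ref{ansatzomega1}) is always available (since $v_1(0)=0$ forces $v_1^2(0)+v_2^2(0)>0$), which already proves the theorem as stated, or by bootstrapping as the paper does in its part~d): on $I_\epsilon$ the hyperbolic polar form gives $v_2^2-v_1^2=\varrho^2/\omega_0^2>0$, and continuity of $v_1,v_2$ then rules out boundary zeros.
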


\begin{proof}
\emph{a) Existence}. Let $v_1$, $v_2$, $I_\omega$ and $I_\epsilon$ be defined as in Lemma~\ref{lemm:oscillmain}. 
The existence of the frequency function on $I_\omega$ or $I_\epsilon$, respectively, 
follows from Lemma~\ref{lemm:oscillmain}. It is clear that each set $I_\omega$ and $I_\epsilon$ differs from 
$[0,\T)$ by a countable set of real numbers. In item c) and d) below, we show with the help of the representation 
formula that $v_1^2 + v_2^2$ or $v_2^2-v_1^2$ has no zeros on $[0\,T)$ if $v_1^2(0) + v_2^2(0)>0$ or  
$v_2^2(0)-v_1^2(0)>0$ holds, respectively. \\ 
\emph{b) Solution formula}. 
Let $\lambda := \alpha \pm \i\,\omega$ with $\alpha:= \frac{a}{2} + \frac{\omega'}{2\,\omega}$. 
Because $\omega$ satisfies condition~(\ref{defomega}), it follows that $\lambda$ satisfies the 
(integrated) characteristic equation of~(\ref{2orderode}), i.e. 
$$
   -\lambda(t) + \lambda(0) + \int_0^t \lambda^2(s)\,\d s 
      - \int_0^t a(s)\,\lambda(s)\,\d s + \int_0^t b(s)\,\d s = 0\,.
$$ 
Thus
$$
   v_1(t) :=  \exp\left( -\tilde\alpha(t) - \i\,\tilde\omega(t)\right)  
                \quad\mbox{and}\quad
   v_2(t) :=  \exp\left( -\tilde\alpha(t) + \i\,\tilde\omega(t)\right)
$$
are solutions of the equation in~(\ref{2orderode}). Because these solutions are \emph{linear independent} for 
non-vanishing $\omega$, the solution of~(\ref{2orderode}) reads as follows 
$$
  v = \left[ C_1\,\exp( -\i\,\tilde\omega ) 
                + C_2\,\exp( \i\,\tilde\omega ) \right]\,\varrho 
  \qquad\mbox{with}\qquad \varrho := \exp( -\tilde\alpha) \,. 
$$
From this together with $v(0)=\varphi$ and $v'(0)=\psi$, 
we get
$$
    C_1 = \frac{\varphi}{2} + \i\,\frac{\psi+\alpha(0)\,\varphi}{2\,\omega(0)} 
             \qquad\mbox{and}\qquad
    C_2 = \frac{\varphi}{2} - \i\,\frac{\psi+\alpha(0)\,\varphi}{2\,\omega(0)} \,, 
$$
which yields the claimed solution formula. \\ 
\emph{c) $v_1^2+v_2^2$ has no zeros on $(0,\T)$ if $v_1^2(0) + v_2^2(0)>0$}. From the solution formula 
and the definition of $v_j$ for $j=1,\,2$, we get 
$$
   v_1(t) = \frac{\sin\left( \tilde\omega(t)\right)}{\omega(0)} \,\varrho(t) 
             \qquad\mbox{and}\qquad 
   v_2(t) = \frac{\cos\left( \tilde\omega(t)\right)}{\omega(0)} \,\varrho(t)   
$$
and thus $v_1^2(t)+v_2^2(t) \not= 0$ for all $t\in (0,\T)$. \\
\emph{d) $v_2^2-v_1^2$ has no zeros on $(0,\T)$ if $v_2^2(0) - v_1^2(0)>0$}. Similarly as before, the solution formula 
and the definition of $v_1$ and $v_2$ imply
$$
   v_1(t) = \frac{\sinh\left( \tilde\epsilon(t)\right)}{\omega_0} \,\varrho(t) 
             \qquad\mbox{and}\qquad 
   v_2(t) = \frac{\cos\left( \tilde\epsilon(t)\right)}{\omega_0} \,\varrho(t)   
$$
and hence $v_2^2(t)-v_1^2(t) \not= 0$ for all $t\in (0,\T)$. 
This concludes the proof.  
\end{proof}

We now present an example of a purely frequency dependent damped oscillation.  

\begin{exam}
Let $\T\in (0,\infty]$, $\omega_0>0$, $\omega_1\in\R$ and $w:(0,\infty)\to [0,\infty)$ be a differentiable 
function that is increasing and that satisfies $w(0+)=\omega_0$, $w'(0+)=\omega_1$ and 
$\lim_{t\to \T} w(t) = \infty$. Moreover, let 
$$
   a := \frac{w'}{w} 
               \qquad\mbox{and}\qquad
   b := w^2 + a' \,.
$$ 
Then $a$, $b$ and $\omega:=w$ satisfy condition~(\ref{defomega}) and 
$\frac{\omega_0}{\omega(t)} = \exp\left( - \int_0^t a(s)\,\d s \right) = \varrho(t)$ for $t\in (0,\T)$ 
and consequently the solution of~(\ref{2orderode}) is given by  
$$
   v(t) = \varphi\,\frac{\omega_0}{\omega(t)}\,\cos\left( \tilde\omega(t)\right) 
           + \left( \psi + \frac{\omega_1}{\omega_0}\,\varphi\right)\,
                 \frac{\sin\left( \tilde\omega(t)\right)}{\omega(t)}  \,.
$$ 
Because of $\lim_{t\to \T}\omega(\T-) = \infty$, it follows that $\lim_{t\to \T} v(t) = 0$. 
This example shows that the relaxation function $\varrho$ may depend only on the frequency. 
\end{exam}

Before we consider the second main theorem in this paper, we recall that the function 
$$
    v(t) := \int_0^t f_0(s)\,\exp\left( -\int_0^{t-s} \alpha(r+s) \,\d r \right)\,\d s \,H(t) 
      \quad\mbox{for}\quad t\in\R 
$$
for continuous $\alpha$ and $f_0$ with $\supp(f) \subset (0,\infty)$ is the unique solution of 
$$
   v' + \alpha\,v = f \quad\mbox{on}\quad \R \qquad\mbox{with}\qquad v|_{t<0} = 0\,,
$$ 
due to $\frac{\d }{\d t} \left( \int_0^{t-s} \alpha(r+s) \,\d r \right) = \alpha(t)$ and $f_0(0)=0$. 
This is a special case of the following result:

\begin{theo}\label{theo:oscillmainb}
Let $a$, $b$, $\omega$ be as in Lemma~\ref{lemm:oscillmain} and $\alpha$, $\varrho$ be as in 
Theorem~\ref{theo:oscillmain}. Then the solution of~(\ref{2orderode}) with $\varphi = \psi = 0$ and continuous 
$f$ satisfying $\supp(f) \subset (0,\infty)$ is given by 
\begin{equation*}
   v(t) = \int_0^t f_0(s)\,\frac{\sin\left( \tilde\omega_s(t-s)\right)}{\omega_s(0)} \,\varrho_s(t-s) \,\d s \,H(t)
     \qquad\mbox{for}\qquad t\in\R\,,
\end{equation*}
where $\omega_s := \omega(\cdot+s)$, $\alpha_s := \alpha(\cdot+s)$ and 
$\varrho_s := \exp( -\tilde\alpha_s(\cdot-s) )$. 
\end{theo}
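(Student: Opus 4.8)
The plan is to recognize the asserted formula as Duhamel's principle (variation of parameters) for the inhomogeneous problem, with the Green kernel built from the homogeneous solution already supplied by Theorem~\ref{theo:oscillmain}. Concretely, for each fixed $s\in(0,\T)$ I would set
$$
   K(t,s) := \frac{\sin\left(\tilde\omega_s(t-s)\right)}{\omega_s(0)}\,\varrho_s(t-s)
$$
and observe that, with $\tau:=t-s$, the map $\tau\mapsto K(s+\tau,s)$ is precisely the solution produced by Theorem~\ref{theo:oscillmain} for the \emph{shifted} homogeneous initial value problem $u''+a(\cdot+s)\,u'+b(\cdot+s)\,u=0$, $u(0)=0$, $u'(0)=1$, whose relaxation function is $\varrho_s=\exp(-\tilde\alpha_s)$, so that $\varrho_s(t-s)=\exp\left(-\int_0^{t-s}\alpha(r+s)\,\d r\right)$ matches the first-order prototype recalled just before the theorem.

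The first genuine step is to justify that $\omega_s:=\omega(\cdot+s)$ is a frequency function for the shifted coefficients $a(\cdot+s),\,b(\cdot+s)$, whence $\alpha_s=\alpha(\cdot+s)$ and $\varrho_s=\exp(-\tilde\alpha_s)$ are the associated dissipation and relaxation functions. This is a \emph{translation covariance} statement: the defining relation~(\ref{defomega}) (equivalently~(\ref{defomega1}) in the differentiable case) is invariant under $r\mapsto r+s$, since $\omega_s'(\tau)=\omega'(\tau+s)$ gives $\frac{\omega_s'}{2\,\omega_s}(\tau)=\frac{\omega'}{2\,\omega}(\tau+s)$, so that every term of~(\ref{defomega1}) is merely evaluated at $\tau+s$. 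Because $\omega$ is a valid frequency function on $[0,\T)$ with $\omega_s(0)=\omega(s)\neq0$, Theorem~\ref{theo:oscillmain} applies to the shifted problem on $[0,\T-s)$ and identifies $\tau\mapsto K(s+\tau,s)$ as claimed.

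From this identification I would read off the three defining properties of a Green kernel, all inherited from the shifted initial value problem: $K(s,s)=0$ (since $\sin(0)=0$); $\partial_t K(t,s)\big|_{t=s}=1$ (the prescribed unit initial velocity, which one also checks directly from $\tilde\omega_s(0)=0$ and $\varrho_s(0)=1$); and, for $t>s$,
$$
   \partial_t^2 K(t,s)+a(t)\,\partial_t K(t,s)+b(t)\,K(t,s)=0,
$$
because $u$ solves the shifted equation while $a(\tau+s)=a(t)$ and $b(\tau+s)=b(t)$. Writing $v(t)=\int_0^t f_0(s)\,K(t,s)\,\d s$ for $t>0$, I would then differentiate twice under the integral sign by the Leibniz rule for a moving upper limit. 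At first order the boundary term $f_0(t)\,K(t,t)=0$ vanishes, so $v'(t)=\int_0^t f_0(s)\,\partial_t K(t,s)\,\d s$; at second order the boundary term equals $f_0(t)\,\partial_t K(t,s)\big|_{s=t}=f_0(t)$, giving
$$
   v''+a\,v'+b\,v=f_0(t)+\int_0^t f_0(s)\,\bigl[\partial_t^2 K+a\,\partial_t K+b\,K\bigr]\,\d s=f_0(t),
$$
while the empty integral yields $v(0)=v'(0)=0$; the factor $H(t)$ together with $\supp(f_0)\subset(0,\infty)$ makes the extension by zero to $t\le0$ consistent.

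The main obstacle, where I would spend the most care, is twofold. First, the translation covariance of the frequency equation in its \emph{integral} form~(\ref{defomega}): the boundary contributions $-\frac{a(t)}{2}+\frac{a(0)}{2}-\frac{\omega'}{2\,\omega}(t)+\frac{\omega'}{2\,\omega}(0)$ must be tracked correctly under $r\mapsto r+s$, which is cleanest after first reducing to the pointwise form~(\ref{defomega1}). Second, the regularity needed to differentiate twice under the integral sign requires joint continuity of $K$, $\partial_t K$ and $\partial_t^2 K$ in $(t,s)$; this follows from the smoothness of $\omega$ and $\alpha$, and hence of $\tilde\omega_s$ and $\varrho_s$ in both arguments, but it should be stated explicitly so that the Leibniz rule is rigorously applicable.
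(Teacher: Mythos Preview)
Your proposal is correct and follows essentially the same route as the paper: both arguments are Duhamel's principle, building the inhomogeneous solution from the shifted homogeneous solutions with data $u(0)=0$, $u'(0)=1$ furnished by Theorem~\ref{theo:oscillmain}. The paper phrases the superposition via the impulse responses $v_s''+a\,v_s'+b\,v_s=f_0(s)\,\delta(t-s)$ rather than your explicit Leibniz computation, and it simply asserts the form of $v_s$ without spelling out the translation covariance of~(\ref{defomega}) that you (rightly) flag as the step needing care.
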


\begin{proof}
It is clear that the solution of 
\begin{equation}\label{2orderode1}
\begin{aligned}
    v'' + a\,v' + b\,v = f_0 \quad\mbox{on}\quad \R \quad\mbox{with} \quad v|_{t<0} = 0 
    \quad\mbox{and} \quad f_0|_{t<0} = 0\,,
\end{aligned}
\end{equation}
is equal to $ v(t) := \int_0^t v_s\,\d s\, H(t)$ for $t\in\R$, where $v_s$ solves  
\begin{equation}\label{2orderode1}
\begin{aligned}
    v_s'' + a\,v_s' + b\,v_s = f_0(s)\,\delta(t-s) \quad\mbox{on}\quad \R \qquad\mbox{with} \quad v_s|_{t<s} = 0\,.
\end{aligned}
\end{equation}
Note that the right hand side $f_0(s)\,\delta(t-s)$ corresponds to the "initial data" 
$v_s(s) = 0$ and $v_s'(s) = f_0(s)$. The claimed formula follows from this and 
$v_s(t) =  f_0(s)\,\frac{\sin\left( \tilde\omega_s(t-s)\right)}{\omega_s(0)} \,\varrho_s(t-s)\,H(t-s)$. 
\end{proof}

\subsection{Classification and examples of oscillations}

The frequency of a \emph{standard} oscillation, i.e. the solution of~(\ref{2orderode}) 
with constant $a$ and $b$, is defined by $\omega = \sqrt{b - \frac{a^2}{4}}$. With this definition the 
\emph{weakly dissipative} oscillation the \emph{aperiodic limit} and \emph{creeping} can be classified by 
$\omega^2>0$, $\omega^2=0$ and $\omega^2<0$, respectively. This motivates:

\begin{defi}\label{defi:ClassOscill}
Let $\T$, $a$, $b$, $\omega$ be as in Definition~\ref{defi:frequency}, $I\subseteq [0,\T)$ and 
$v$ be the solution of~(\ref{2orderode}) on $(0,\T)$. 
\begin{itemize}
\item [1)] We call $v$ a \emph{weakly dissipative oscillation} on $I$, if 
           $\omega^2(t)>0$ for $t\in I$. 

\item [2)] We call the solution $v$ an \emph{aperiodic limit} on $I$ if $\omega^2(t)=0$ for $t\in I$. 

\item [3)] We call $v$ a \emph{creeping process} on $I$, if $\omega^2(t)<0$ for $t\in I$.

\item [4)] We call $v$ a \emph{mixed oscillation}, if none of the previous items are true on the whole 
           time interval $(0,\T)$. 

\end{itemize}
\end{defi}

The following example shows that a solution of $v''+a\,v'+b\,v=0$ with $a>0$ need not be damped.

\begin{exam}\label{exam:dissipativenotdamped}
Let $\T\in(0,\infty]$, $\omega:(0,\T)\to (0,\infty)$ be differentiable with $\omega>0$, 
$a:=-\frac{\omega'}{\omega}$ and $b:(0,\T)\to (0,\infty)$ be defined 
by~\footnote{If $\omega$ is twice differentiable, then $b = \omega^2 + \left(\frac{\omega'}{\omega}\right)'$.}
$
    \int_0^t \omega^2(s) \,\d s + \frac{\omega'}{\omega} (t) - \frac{\omega'}{\omega}(0)
          = \int_0^t b(s) \,\d s  \,.
$
Then $\alpha$ vanishes and the solution of problem~(\ref{2orderode}) reads as follows  
$v = \varphi\,\cos(\tilde\omega) + \psi\,\frac{\sin(\tilde\omega)}{\omega(0)}$. Here we assume that $\varphi$ and 
$\psi$ do not both vanish. 
Although $v$ solve the "damped" oscillation equation with non-vanishing 
$a$ and $b$, it does not describe a damped oscillation. However, its frequency function is not constant. \\
For example, if $\omega(t) = \omega_0\,e^{-\tilde \eta\,t}$ for $t\in [0,\T)$, where $T\in(0,\infty]$ and  
$\eta$, $\eta'$ are positive monotonic increasing functions with $\lim_{t\to\T-} \eta(t) = \infty$, 
then $a = \eta$ and $b = \omega^2 + \eta'$. We see that $a$ is monotonic increasing with 
$\lim_{t\to\T-} a(t) = \infty$ and that $b$ is monotonic increasing on $(\T_0,\T)$ for some $\T_0>0$ and  
$\lim_{t\to\T-} b(t) = \infty$. Moreover, the oscillation $v$ does not stop at $\T$, i.e. not both 
numbers $v(\T)$ and $v'(\T)$ vanish. 
\end{exam}

In the next two example, we model weakly dissipative oscillations with a finite stopping time $\T>0$. 
For appropriate initial frequency ($\omega_0=0$ or $\omega_0\in\i\,\R$), each of these examples reduces 
to an aperiodic limit  or creeping process. But these cases we will not be further investigated.

\begin{exam}\label{Exam:WDiss01}
Let $\T$, $a_0$, $b_0$ and $\omega_0$ be positive, $a_1\in (-\infty,a_0]$, 
$$
     a(t) := \frac{a_0\,\T}{\T-t} - \frac{a_1}{\T}\,t\,,\quad
     \omega(t) := \frac{\omega_0\,\T}{\T-t}  \quad\mbox{with}\quad 
        \omega_0 := \sqrt{b_0 - \left(\frac{a_0}{2}+\frac{1}{2\,\T}\right)^2}
$$
and
\begin{equation}\label{helpb01}
     b(t) := \frac{b_0\,\T^2}{(\T-t)^2} 
             + \frac{a_1^2}{4\,\T^2}\,t^2
             - \frac{a_1\,a_0}{2\,(\T-t)}\,t  
             - \frac{a_1}{2\,\T} 
\end{equation}
for $t\in [0,\T)$. Then $a$, $b$ and $\omega$ satisfy identity~(\ref{defomega}) and the solution 
of~(\ref{2orderode}) describes an oscillation that is weak dissipative or creeping if 
$\omega_0>0$ or $\omega_0 = \i\,\epsilon_0$ with $\epsilon_0>0$, respectively.  
For the special case $a_1=a_0$, we have $a'(0)=0$. Here we focus on the weak dissipative case. 
According to Theorem~\ref{theo:oscillmain}, the solution of~(\ref{2orderode}) reads as follows  
\begin{equation*}
\begin{aligned}
   v(t) &= \left[ \varphi\,\cos\left( \omega_0\,\T\,\log\left( 1- \frac{t}{\T} \right) \right) \right. \\
        &\qquad \left. 
                 - \left( \psi + \left( \frac{a_0}{2}+\frac{1}{2\,\T}\right)\,\varphi\right)\,
                 \frac{\sin\left( \omega_0\,\T\,\log\left( 1- \frac{t}{\T} \right) \right)}{\omega_0}          
          \right]\,\varrho(t)
\end{aligned}
\end{equation*}
with
$$
     \varrho(t) 
       = \sqrt{ \left( 1 - \frac{t}{\T} \right)^{1+a_0\,\T} }\,
         \exp\left( \frac{a_1}{4\,\T}\,t^2 \right)
$$ 
for $t\in[0,\T]$. Here we have used that $a(0)= a_1$, $\frac{\omega'(0)}{2\,\omega(0)} = \frac{1}{2\,\T}$,  
$\omega(0) = \omega_0$, 
$\int_0^t \frac{1}{\T-s}\,\d s = - \log\left( 1- \frac{t}{\T} \right)$ and 
$\alpha(t) = \frac{1+a_0\,\T}{2\,(\T-t)} - \frac{a_1}{2\,\T}\,t$. 
We note that 
\begin{itemize}
\item $\alpha'$ is strictly increasing if $a_1 < a_0 + \frac{1}{\T}$ (above we required $a_1<a_0$) and 
      thus $\varrho$ is strictly decreasing,

\item $\varrho$ is a linear (and decreasing) if $a_0=\frac{1}{\T}$ and $a_1=0$; 

\item if "$\T =\infty$", then $a = a_0$, $b = b_0$ and 
      $\omega_0 = \sqrt{b_0 - \frac{a_0^2}{4}}$, more precisely, for the limit $\T\to\infty$, 
      we obtain the standard oscillation with constant coefficients.

\end{itemize}

Two numerical examples are presented in Fig.~\ref{Fig:Exam:WDiss01} and Fig.~\ref{Fig:Exam:WDiss01B} for the setting 
(i) $\T=10$, $a_0\,\T=0.1$, $a_1=0$, $b_0\,\T=20$, $\varphi=\psi=1$ and (ii) $\T=10$, $a_0\,\T=5$, $a_1=0$, 
$b_0\,\T=1000$, $\varphi=\psi=1$, respectively.

\begin{figure}[!ht]
\begin{center}
\includegraphics[height=5cm,angle=0]{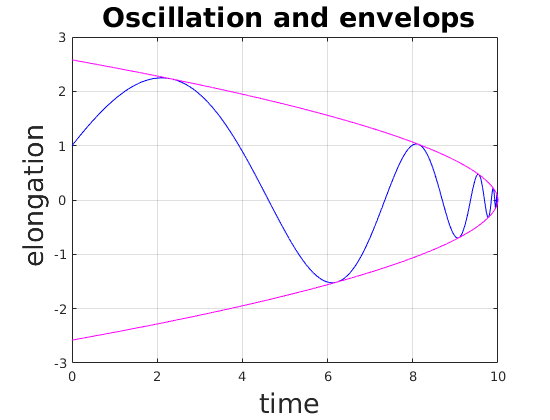}
\includegraphics[height=5cm,angle=0]{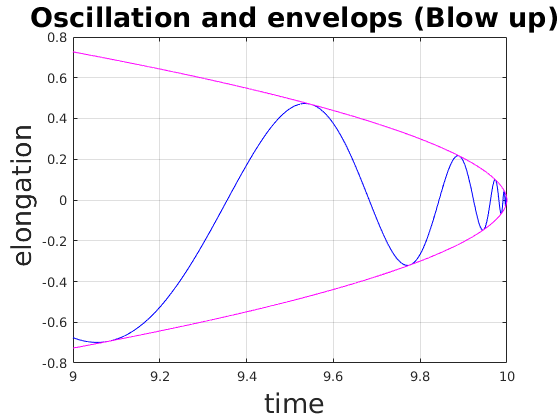}
\includegraphics[height=5cm,angle=0]{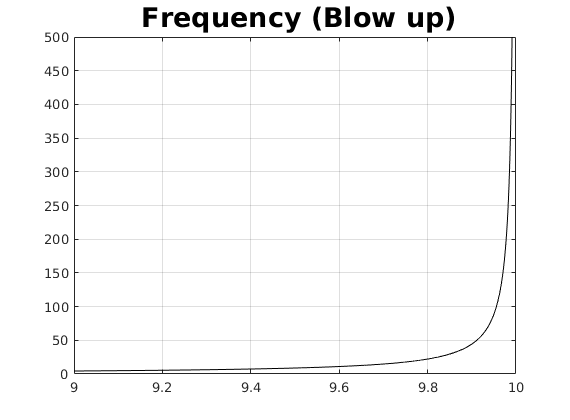}
\includegraphics[height=5cm,angle=0]{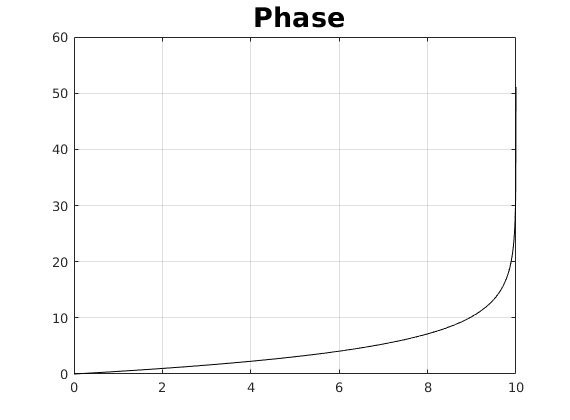}
\end{center}
\caption{Visualization of the oscillation satisfying $v'' + \frac{v'}{10\,(10-t)} + \frac{20\,v}{(10-t)^2}=0$ with 
$v(0) = v'(0) = 1$ (cf. Example~\ref{Exam:WDiss01}). Here the function 
$t\mapsto \tilde \omega(t) := \int_0^t \omega(s)\,d s$ is called \emph{phase function}. 
}
\label{Fig:Exam:WDiss01}
\end{figure}

\begin{figure}[!ht]
\begin{center}
\includegraphics[height=5cm,angle=0]{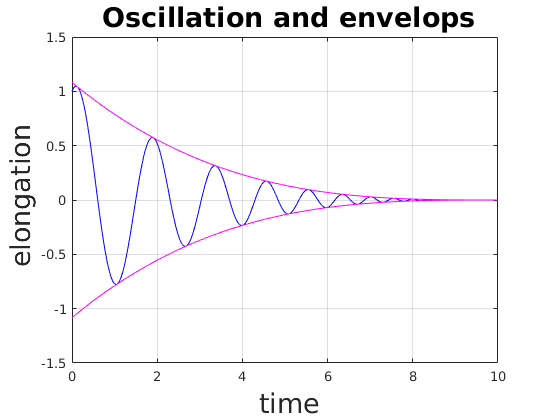}
\includegraphics[height=5cm,angle=0]{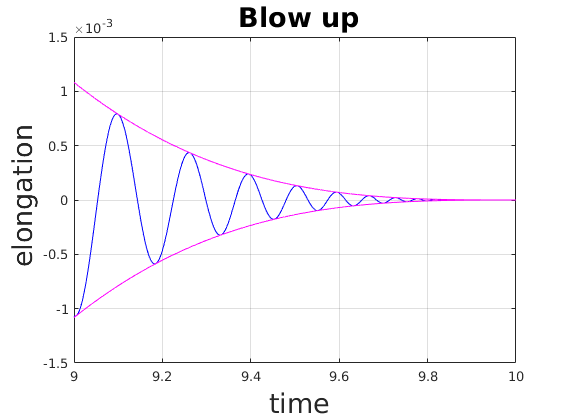}
\end{center}
\caption{Visualization of the oscillation satisfying $v'' + \frac{5\,v'}{10-t} + \frac{10^3\,v}{(10-t)^2}=0$ with 
$v(0) = v'(0) = 1$ (cf. Example~\ref{Exam:WDiss01}). 
}
\label{Fig:Exam:WDiss01B}
\end{figure}
\end{exam}

The previous example can be generalized as follows.

\begin{exam}
Let $\T$, $a_0$ and $b_0$ be positive, $n\in\N\backslash\{1\}$ and $a_1\in (-\infty,n\,a_0]$. Then 
identity~(\ref{defomega}) is satisfied for $a$, $b$ and $\omega$ defined by
$$
     a(t) := \frac{a_0\,\T^n}{(\T-t)^n} - \frac{a_1}{\T}\,t\,,\quad
     \omega(t) := \frac{\omega_0\,\T^n}{(\T-t)^n}  \quad\mbox{with}\quad 
        \omega_0 := \sqrt{ b_0 - \frac{a_0^2}{4} }
$$
and
$$
     b(t) := \frac{b_0\,\T^{2\,n}}{(\T-t)^{2\,n}} 
             + \frac{n\,a_0\,\T^n}{2\,(\T-t)^{n+1}}
             + \frac{n\,(2-n)}{4\,(\T-t)^2}
             - \frac{a_1\,a_0\,\T^{n-1}}{2\,(\T-t)^n}\,t  
             + \frac{a_1^2}{4\,\T^2}\,t^2
             - \frac{a_1}{2\,\T} 
$$
for $t\in [0,\T)$. Again, the solution $v$ of~(\ref{2orderode}) describes an oscillation that is weak 
dissipative or creeping if $\omega_0>0$ or $\omega_0 = \i\,\epsilon_0$ with $\epsilon_0>0$, respectively. 
We focus on the weak dissipative case and obtain 
\begin{equation*}
\begin{aligned}
   v(t) &= \left[ \varphi\,\cos\left( \frac{\omega_0\,\T^{2\,n-1}}{n-1}\, 
                                       \left[ \left( 1-\frac{t}{\T}\right)^{1-n} - 1 \right] \right) \right. \\
        &\quad \left. 
                 + \left( \psi + \left(\frac{a_0}{2}+\frac{n}{2\,\T}\right)\,\varphi\right)\,
                 \frac{\sin\left( \frac{\omega_0\,\T^{2\,n-1}}{n-1}\,
                           \left[ \left( 1-\frac{t}{\T}\right)^{1-n} - 1 \right] \right)}{\omega_0}          
          \right]\,\varrho(t)
\end{aligned}
\end{equation*}
with 
$$
  \varrho(t) = \left( 1-\frac{t}{\T} \right)^{\frac{n}{2}} \,
               \exp\left( \frac{a_1}{4\,\T}\,t^2 
                         - \frac{a_0\,\T^{2\,n-1}}{2\,(n-1)}\,\left( \left( 1-\frac{t}{\T} \right)^{1-n} -1 \right) \right)\,.
$$
If $a_1 \leq n\,a_0$ (as in the above assumption), then $\alpha'$ is strictly increasing and therefore 
$\varrho$ is strictly decreasing. For a numerical example see Fig.~\ref{Fig:Exam:WDiss01B}. 
\end{exam}

\begin{rema}
Apart from the fact that the representation formula enables us to solve any normalized second order ode 
(with nonconstant coefficients), it is also a very useful tool to model various dissipative oscillations, 
because it reveals the interplay between the coefficients $a$, $b$, and the damping constant $\alpha$ (or equivalently the relaxation function) and the frequency function $\omega$. 

We note that modeling of dissipative oscillations may also involve some requirement about its energy 
behaviour, but this is beyond the scope of this paper.
\end{rema}

\section{Modeling dissipative waves with dissipative oscillations}
\label{sec-DissWaves}

For pure frequency dependent dissipation, the  standard dissipative wave equation (cf.~\cite{KoSc12}) reads 
as follows
$$
    \left(D_{\alpha} + \frac{1}{c_0}\,\frac{\partial}{\partial t} \right)^2 u 
      - \Delta u 
                = f\, \quad\mbox{with}\quad u|_{t<0} = 0 \,,
$$ 
where $\alpha:\R\to\C$ is a given dissipation (or complex attenuation) law. Here the operator $D_{\alpha}$ 
is defined by $D_{\alpha} f = \F^{-1}\left( \alpha\,\F(f)\right)$, where $\F(f)$ denotes the 
Fourier transform of a tempered distribution $f$. We call a function $\alpha:\R\to\C$ a \emph{dissipation law} 
if $\Re(\alpha)$ is nonnegative and even and $\Im(\alpha)$ is odd. We do not assume that the attenuation 
law $\Re(\alpha)$ is monotonically increasing for positive frequencies and $\Re(\alpha)(0)=0$. 
We also denote the Fourier transform of $f$ by $\hat f$. 
More facts about wave dissipation can be found  in~\cite{NacSmiWaa90,Sz94,Sz95,KiFrCoSa00,WaHuBrMi00,We00,HanSer03,CheHolm04,WaMoMi05,PatGre06,KoSc12}. 
The fundamental solution of the above equation is given by~(\ref{modelG}) with~(\ref{modelKR}).

We call $\left( \K_R \right)_{R\geq 0}$ a \emph{semigroup of dissipative oscillations} if there exists a dissipation law $\alpha$ such that $\hat\K_R = e^{-\alpha\,R}$ for 
$R\geq 0$ and $\K_R(t) = 0$ for $t<0$ and $R>0$. Then we have (i) $\K_{R_1+R_2} = \K_{R_1} *_t \K_{R_2}$ for $R_1,R_2>0$ and (ii) $\lim_{R\to 0} \K_R = \delta$ in the distributive sense. 
As described in the introduction, a dissipative spherical wave $G$ is of the form~(\ref{modelG}) for given 
$\left( \K_R \right)_{R\geq 0}$.

Moreover, we call $\K_1$ a \emph{relaxation function} if $\K_1$ is a positive, monotonic 
decreasing causal function (, i.e. $\K_1|_{t<0}=0$) that is an element of $L^1(\R)$.

\begin{prop}\label{prop:alpha1}
Let $\K_1$ be a relaxation function satisfying $\|\K_1\|_{L^1(\R)} \leq \frac{1}{\sqrt{2}}$. 
Then there exists a dissipation law $\alpha$ such that $\hat \K_1 := e^{-\alpha}$.  
\end{prop}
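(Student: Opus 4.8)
The plan is to realize $\alpha$ as a branch of $-\log\hat\K_1$ on the real line. Thus the proposition reduces to three facts: that $\hat\K_1(\omega)\neq 0$ for every $\omega\in\R$ (so the logarithm exists), that $|\hat\K_1(\omega)|\le 1$ (so $\Re\alpha\ge 0$), and that a branch can be chosen making $\Re\alpha$ even and $\Im\alpha$ odd. The second fact is immediate: since $\K_1$ is causal and nonnegative, $|\hat\K_1(\omega)| = |\int_0^\infty \K_1(t)\,e^{-\i\omega t}\,\d t| \le \int_0^\infty\K_1(t)\,\d t = \|\K_1\|_{L^1(\R)}\le\tfrac1{\sqrt2}<1$, so $\Re\alpha = -\log|\hat\K_1|\ge\tfrac12\log 2>0$; moreover $|\hat\K_1|$ is even because $\K_1$ is real, i.e. $\hat\K_1(-\omega)=\overline{\hat\K_1(\omega)}$, so $\Re\alpha$ is even. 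Hence the $L^1$-bound is needed only to guarantee nonnegative (indeed strictly positive) attenuation, and any bound $<1$ would do.

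The hard part is the nonvanishing of $\hat\K_1$ on $\R$. My approach is to exploit the monotonicity through the sign of the sine transform. Writing $\hat\K_1(\omega) = \int_0^\infty\K_1(t)\cos(\omega t)\,\d t - \i\int_0^\infty\K_1(t)\sin(\omega t)\,\d t$, I would show $\int_0^\infty\K_1(t)\sin(\omega t)\,\d t>0$ for every $\omega>0$. To see this, partition $[0,\infty)$ into the intervals $J_k=[k\pi/\omega,(k+1)\pi/\omega]$ on which $\sin(\omega t)$ has constant sign $(-1)^k$, and set $a_k:=\int_0^{\pi/\omega}\K_1(s+k\pi/\omega)\sin(\omega s)\,\d s\ge 0$. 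Since $\K_1$ is decreasing, $a_k\ge a_{k+1}$ and $a_k\to 0$, so the sine transform equals the alternating series $\sum_{k\ge 0}(-1)^k a_k\ge a_0-a_1$, which is strictly positive because $\K_1(s)>\K_1(s+\pi/\omega)$ on a set of positive measure (here the strict decrease, together with $\K_1\in L^1$ excluding periodic behaviour, is what I must pin down). Consequently $\Im\hat\K_1(\omega)<0$ for $\omega>0$, and by conjugate symmetry $\Im\hat\K_1(\omega)>0$ for $\omega<0$, while $\hat\K_1(0)=\|\K_1\|_{L^1(\R)}>0$. Thus $\hat\K_1$ never vanishes on $\R$.

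This sign information also settles the branch cleanly. For $\omega\ne 0$ the value $\hat\K_1(\omega)$ lies strictly off the positive and negative real axes, in the open lower half-plane for $\omega>0$ and the open upper half-plane for $\omega<0$, so the principal logarithm is unambiguous and never meets its cut. I would then define $\alpha(\omega):=-\log|\hat\K_1(\omega)|-\i\,\arg\hat\K_1(\omega)$ with $\arg\in(-\pi,\pi)$, and $\alpha(0):=-\log\hat\K_1(0)\in\R$. Then $e^{-\alpha}=\hat\K_1$ by construction, $\Re\alpha$ is even and nonnegative as above, and $\Im\alpha(\omega)=-\arg\hat\K_1(\omega)$ is odd because $\arg\hat\K_1(-\omega)=-\arg\hat\K_1(\omega)$ (again from $\hat\K_1(-\omega)=\overline{\hat\K_1(\omega)}$ and the absence of real-axis values). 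The main obstacle is therefore entirely the strict positivity of the sine transform, i.e. making the monotonicity hypothesis do its work uniformly in $\omega$; everything else is bookkeeping with the $L^1$-bound and the conjugate symmetry.
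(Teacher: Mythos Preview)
Your proposal is correct and follows essentially the same route as the paper: the nonvanishing of $\hat\K_1$ is obtained from the definite sign of the sine transform of a positive decreasing $L^1$ function, and the modulus bound then gives $\Re\alpha\ge 0$. The paper merely asserts the sine-transform positivity by ``visualizing the graph'', whereas you supply the alternating-series argument over the intervals $[k\pi/\omega,(k+1)\pi/\omega]$; this is the standard way to make that visual argument rigorous, and your remark that strict positivity needs $\K_1$ to be genuinely non-constant on some period (which follows from $\K_1>0$, decreasing, and $\K_1\in L^1$, hence $\K_1(t)\to 0$) is the right way to close it.

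One point where your argument is sharper: you bound $|\hat\K_1(\omega)|\le\|\K_1\|_{L^1}$ directly, while the paper estimates real and imaginary parts separately to get $|\hat\K_1|^2\le 2\|\K_1\|_{L^1}^2$, which is precisely what forces the constant $1/\sqrt{2}$. Your observation that any bound $\|\K_1\|_{L^1}\le 1$ would already give $\Re\alpha\ge 0$ is therefore correct and an improvement on the paper's hypothesis. You are also more explicit than the paper about the choice of branch for $\Im\alpha$ and why conjugate symmetry makes it odd; the paper essentially takes this for granted.
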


\begin{proof} 
Because $\K_1\in L^1(\R)$, its Fourier transform exists and 
$$
    \Re(\hat\K_1)(\omega) 
       = \int_0^\infty \K_1(t)\,\cos(2\,\pi\,\omega\,t) \,\d t  
         \quad\mbox{and}\quad  
    \Im(\hat\K_1)(\omega) 
       = \int_0^\infty \K_1(t)\,\sin(2\,\pi\,\omega\,t) \,\d t \,.        
$$ 
We have to show that there exists a dissipation law $\alpha$ satisfying $\hat\K_1 = e^{-\alpha}$. 
Such a dissipation law $\alpha$ exists, if $\Re(\alpha) \equiv -\log |\hat\K_1| \in [0,\infty)$, 
$\Re(\alpha)$ is even and $\Im(\alpha) \equiv \frac{\hat\K_1}{|\hat\K_1|}$ is odd. 
That $\Re(\alpha)$ is even and $\Im(\alpha)$ is odd follows at once from the fact that $\K$ is real 
valued. Thus it remains to show that $|\hat\K_1|\in (0,1]$ is true. \\
Visualizing the graph of $t \mapsto \K_1(t)\,\sin(2\,\pi\,\omega\,t)$ shows that $\Im(\hat\K_1)(\omega) > 0$ 
for $\omega\in\R$, because $\K_1$ is positive and decreasing on $(0,\infty)$. As a consequence, 
$|\hat\K_1| = \sqrt{ \Re(\hat\K_1)^2 + \Im(\hat\K_1)^2 }$ does not vanish on $\R$. 
It remains to show that $\Re(\hat\K_1)^2 + \Im(\hat\K_1)^2$ is bounded by $1$. But this property is true, 
due to  
$$
    \Re(\hat\K_1)^2 + \Im(\hat\K_1)^2 
      \leq 2\,\int_0^\infty \K_1(t)^2\,\d t
      =  2\,\|\K_1\|_{L^1(\R)}^2
$$ 
and the assumption  
$\|\K_1\|_{L^1(\R)} \leq \frac{1}{\sqrt{2}}$. \\
Well-definedness of $\K_R$ for $R\geq 0$. Because the complex exponential function satisfies 
$(e^{z})^R=e^{z\,R}$ for $z\in\C$ and $R\in\R$, it follows that $\K_R = \F^{-1}(e^{-\alpha\,R}) = \K_1^R$ exists 
and is well-defined for $R\geq 0$. 
\end{proof}

Two of the simplest oscillations are of the form $\K_1 := \varrho$ and 
$\K_1 := \varrho\,\cos(2\,\pi\,\omega_0\,t)$, where $\varrho$ is a relaxation and $\omega_0$ is a constant 
frequency. The first one was already considered in the previous proposition, now we consider the second 
one.
For convenience, we define $[0,\T]$ to be $[0,\T)$ if $\T=\infty$.

\begin{prop}\label{prop:Semigroup2}
Let $\omega_0$ be a positive constant, $\varrho\in L^1(\R)$ satisfy 
$\|\varrho\|_{L^1(\R)} \leq \frac{1}{\sqrt{2}}$ and $\supp(\varrho) = [0,\T]$ for some 
$\T\in (0,\infty]$. Then $\K_1 := \varrho\,\cos(2\,\pi\,\omega_0\,t)$ induces a \emph{semigroup of 
dissipative oscillations} by $\hat \K_R := \hat\K_1^R$ satisfying $\supp(\K_R) = [0,R\,\T]$ for 
$R\geq 0$.
\end{prop}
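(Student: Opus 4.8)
The plan is to reduce the statement to the criterion already used in Proposition~\ref{prop:alpha1}: I must produce a dissipation law $\alpha$ with $\hat\K_1 = e^{-\alpha}$, which amounts to showing $|\hat\K_1|\in(0,1]$ on $\R$ together with the parity conditions, and then read off the semigroup structure and the support from this $\alpha$. First I would record the modulation identity
$$
   \hat\K_1(\omega) = \tfrac{1}{2}\bigl[\hat\varrho(\omega-\omega_0) + \hat\varrho(\omega+\omega_0)\bigr],
$$
which turns every question about $\hat\K_1$ into one about the (already understood) transform of the relaxation $\varrho$. The bound $|\hat\K_1|\le 1$ is then immediate: since $|\cos(2\pi\omega_0 t)|\le 1$ we get $\|\K_1\|_{L^1}\le\|\varrho\|_{L^1}\le 1/\sqrt2$, and the estimate $\Re(\hat\K_1)^2+\Im(\hat\K_1)^2\le 2\|\K_1\|_{L^1}^2\le 1$ from Proposition~\ref{prop:alpha1} applies verbatim. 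The parity statements ($\Re\alpha$ even, $\Im\alpha$ odd) follow, exactly as before, from $\K_1$ being real valued, so that $\hat\K_1(-\omega)=\overline{\hat\K_1(\omega)}$.

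The hard part is non-vanishing: I need $\hat\K_1(\omega)\neq0$ for every $\omega\in\R$ so that $\alpha:=-\log\hat\K_1$ is well defined with a continuous odd phase. Here the clean argument of Proposition~\ref{prop:alpha1} (namely $\Im\hat\varrho>0$ on $(0,\infty)$ for a positive decreasing $\varrho$) only settles the regime $|\omega|\ge\omega_0$, where both shifted arguments $\omega\pm\omega_0$ carry the same sign and hence $\Im\hat\K_1(\omega)=\tfrac12[\Im\hat\varrho(\omega-\omega_0)+\Im\hat\varrho(\omega+\omega_0)]\neq0$. The delicate case is the band $|\omega|<\omega_0$, where the two contributions to $\Im\hat\K_1$ have opposite signs and may cancel; there I would instead try to show that the cosine part $\Re\hat\K_1(\omega)=\tfrac12[\Re\hat\varrho(\omega_0-\omega)+\Re\hat\varrho(\omega_0+\omega)]$ stays positive. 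This is where I expect the main obstacle: pointwise positivity of the cosine transform of $\varrho$ on $(0,2\omega_0)$ does not follow from monotonicity alone, so it will require an extra structural property of the relaxation (strict decrease, or convexity of $\varrho$) or an argument that $\Re\hat\K_1$ and $\Im\hat\K_1$ cannot vanish simultaneously.

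Granting non-vanishing, the semigroup part becomes formal: $\alpha$ is a dissipation law, the identity $e^{-\alpha(R_1+R_2)}=e^{-\alpha R_1}e^{-\alpha R_2}$ gives $\K_{R_1+R_2}=\K_{R_1}*_t\K_{R_2}$, and $e^{-\alpha R}\to 1$ as $R\to0$ gives $\K_R\to\delta$, just as in the discussion preceding Proposition~\ref{prop:alpha1}.

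For the support claim I would use the Paley--Wiener--Schwartz theorem in its appendix form. Since $\supp\K_1=[0,\T]$, the Fourier--Laplace transform of $\K_1$ continues holomorphically to the relevant half-plane with an exponential bound whose type in the causal direction is exactly $\T$ and whose type at the left edge is $0$; carrying the non-vanishing (needed above) into this half-plane lets me form $\hat\K_1^{R}=e^{-R\alpha}$ as a holomorphic function whose exponential type is scaled by $R$. The converse direction of Paley--Wiener--Schwartz then yields $\supp\K_R\subseteq[0,R\T]$, while causality $\K_R|_{t<0}=0$ comes from the half-plane analyticity. To upgrade the inclusion to the equality $\supp\K_R=[0,R\T]$ I would check that both endpoints are attained: the left endpoint from $\inf\supp\K_1=0$ together with $\varrho(0+)>0$, and the right endpoint from the type being exactly $\T$, i.e. the edge contributions do not cancel (for integer $R$ this is simply $\supp\K_n=\overline{\supp\K_1+\cdots+\supp\K_1}=[0,n\T]$). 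These twin points---extending non-vanishing into the complex half-plane and pinning the exact right endpoint rather than a mere inclusion---are, after the band estimate, the second place where genuine care is required.
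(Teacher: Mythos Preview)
Your outline follows the paper's proof almost step for step: the modulation identity, the $L^1$ bound giving $|\hat\K_1|\le 1$, the parity conditions from real-valuedness, and the appeal to the Paley--Wiener--Schwartz appendix (Proposition~\ref{prop:alpha2}) for the support statement are all exactly what the paper does. So structurally you are on target.

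Where you and the paper differ is precisely the point you flag as ``the hard part''. The paper dispatches the non-vanishing of $\hat\K_1$ in a single line: from the modulation identity it asserts that, by the proof of Proposition~\ref{prop:alpha1}, $\Re(\hat\varrho)$ is positive on $\R$, whence $\Re(\hat\K_1)(\omega)=\tfrac12[\Re\hat\varrho(\omega-\omega_0)+\Re\hat\varrho(\omega+\omega_0)]>0$ everywhere. You should notice, however, that the proof of Proposition~\ref{prop:alpha1} actually establishes $\Im(\hat\varrho)>0$ on $(0,\infty)$ for a positive decreasing $\varrho$, not positivity of $\Re(\hat\varrho)$; and positivity of the cosine transform of a merely monotone $L^1$ function is in general false (take $\varrho=\chi_{[0,1]}$, whose cosine transform changes sign). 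Your instinct that extra structure on $\varrho$---convexity, or at least strict decrease---is needed to close this step is therefore well placed, and your splitting into the regimes $|\omega|\ge\omega_0$ (handled by $\Im\hat\varrho$) and $|\omega|<\omega_0$ (requiring control of $\Re\hat\varrho$) is a sharper analysis than the paper itself provides. In short: your approach is the paper's approach, but your reservations about the non-vanishing step are legitimate and not resolved by the paper's own argument as written.
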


\begin{proof} 
Because $\varrho\in L^1(\R)$, $\K_1\in L^1(\R)$ and its Fourier transform exists. 
According to the proof of Proposition~\ref{prop:alpha1}, it remains to show that 
$$
        \Re(\hat\K_1)^2(\omega) + \Im(\hat\K_1)^2(\omega) \in (0,1]
       \qquad\mbox{for}\qquad   \omega\in\R\,. 
$$ 
From $|\Re(\hat\K_1)|,\, |\Re(\hat\K_1)| \leq \int_0^\infty |\varrho(t)|\,\d t $, we infer 
$$
    \Re(\hat\K_1)^2 + \Im(\hat\K_1)^2 
      \leq 2\,\|\varrho\|_{L^1(\R)}^2 \leq 1
$$ 
and consequently $|\hat\K_1(\omega)| \leq 1$. Now we show that $|\hat \K_1|$ does not vanish on $\R$.  
From 
$$
   \cos(2\,\pi\,\omega_0\,t) 
     = \frac{1}{2}\,\left( e^{\i\,2\,\pi\,\omega_0\,t} + e^{-\i\,2\,\pi\,\omega_0\,t} \right) 
     =: \frac{1}{2}\,(h_1(t) +h_2(t))
$$
and 
$$
   \delta(\omega \mp \omega_0) = \F(e^{\pm\i\,2\,\pi\,\omega_0\,t})(\omega) \,,
$$
we infer 
$$
   \hat\K_1(\omega)
     = \frac{1}{2}\,\int_0^\infty \hat\varrho(\omega-s)\, (\hat h_1 + \hat h_2)(s) \,\d s 
     = \frac{1}{2}\,\left[\hat\varrho(\omega-\omega_0) 
                                   + \hat\varrho(\omega+\omega_0)\right]\,.
$$
According to the proof of  Proposition~\ref{prop:alpha1}, $\Re(\hat \varrho)$ is positive and thus 
$|\hat \K_1|$ does not vanish, too. \\ 
The last claim $\supp(\K_R) = [0,R\,\T]$ for $R\geq 0$ follows at once from Proposition~\ref{prop:alpha2} in 
the Appendix. Note, $\alpha$ is entire, due to $\supp(\K_1) = [0,\T]$. This concludes the proof. 
\end{proof}

The essential property that we required for the proof of the previous two Propositions was 
\begin{equation}\label{EssProp}
              |\hat\K_1|(\omega) > 0  \qquad\mbox{for}\qquad   \omega\in\R \,.
\end{equation} 
Let $\T\in (0,\infty)$, $C_\omega(t):= \cos(2\,\pi\,\omega\,t)$, $S_\omega(t):= \sin(2\,\pi\,\omega\,t)$ 
for $t\in[0,\T]$ and $\langle f,g \rangle := \int_0^\T f(t)\,g(t)\,\d t$ for $f,\,g\in L^1(0,\T)$. 
Then, geometrically, property~(\ref{EssProp}) means that 
\begin{itemize} 
\item [(EP)] $\K_1$ is not orthogonal to $C_\omega$ and $S_\omega$ for all $\omega\in\R$\,,
\end{itemize}
with respect to the scalar product $\langle \cdot,\cdot \rangle$. If $\T$ is infinite, then we may consider 
the scalar product on $C(0,\infty)$. In summary, we have

\begin{theo}
Let $\varrho$ be a relaxation function $\supp(\varrho) = [0,\T]$ for some $\T\in (0,\infty]$ satisfying 
$\|\varrho\|_{L^1(\R)} \leq \frac{1}{\sqrt{2}}$ and $\omega_*:\R\to\R$ be a real-valued frequency function. 
If property~(\ref{EssProp}) or equivalently property (EP) holds, then 
$\K_1$ induces a \emph{semigroup of dissipative oscillations} 
by $\hat \K_R := \hat\K_1^R$ satisfying $\supp(\K_R) = [0,R\,\T]$ for $R\geq 0$.
\end{theo}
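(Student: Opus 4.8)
The plan is to recognize that this statement is simply the common abstraction of Propositions~\ref{prop:alpha1} and~\ref{prop:Semigroup2}: in those two results the decisive condition $|\hat\K_1|(\omega)>0$ was \emph{derived} for the special shapes $\K_1=\varrho$ and $\K_1=\varrho\,\cos(2\,\pi\,\omega_0\,t)$, whereas now it is \emph{assumed} through~(\ref{EssProp}). I would therefore keep the same three-part template --- bound $|\hat\K_1|$ above by $1$, bound it below away from $0$, and then build the dissipation law --- and treat the support identity as a separate final step. Throughout, $\K_1$ denotes the oscillation associated with $\varrho$ and $\omega_*$ (as in Proposition~\ref{prop:Semigroup2}, with the constant frequency $\omega_0$ replaced by $\omega_*$), so that its oscillatory factor is bounded by $1$ and hence $|\K_1(t)|\leq \varrho(t)$ for every $t$.

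First I would record that, since $\varrho\in L^1(\R)$ and $|\K_1|\leq\varrho$, we have $\K_1\in L^1(\R)$, so $\hat\K_1$ exists and is continuous. Exactly as in the proof of Proposition~\ref{prop:Semigroup2}, the estimates $|\Re(\hat\K_1)|,\,|\Im(\hat\K_1)|\leq \int_0^\infty |\varrho(t)|\,\d t = \|\varrho\|_{L^1(\R)}$ give
$$
   \Re(\hat\K_1)^2+\Im(\hat\K_1)^2 \leq 2\,\|\varrho\|_{L^1(\R)}^2 \leq 1 ,
$$
so $|\hat\K_1|\leq 1$ on $\R$; property~(\ref{EssProp}) supplies the reverse bound $|\hat\K_1|>0$, whence $|\hat\K_1|\in(0,1]$ on all of $\R$.

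Next I would reproduce the construction of the dissipation law and the semigroup from Proposition~\ref{prop:alpha1}. Because $\K_1$ is real valued, $\hat\K_1(-\omega)=\overline{\hat\K_1(\omega)}$, so $|\hat\K_1|$ is even and $\arg(\hat\K_1)$ is odd; setting $\alpha:=-\log\hat\K_1$ then yields $\Re(\alpha)=-\log|\hat\K_1|\in[0,\infty)$ even and $\Im(\alpha)$ odd, i.e. $\alpha$ is a dissipation law with $\hat\K_1=e^{-\alpha}$. Since $|\hat\K_1|$ never vanishes, $\hat\K_1^R=e^{-\alpha R}$ is unambiguous, and $(e^z)^R=e^{z\,R}$ makes $\K_R:=\F^{-1}(e^{-\alpha R})$ well defined for $R\geq 0$, together with the semigroup law $\K_{R_1+R_2}=\K_{R_1}*_t\K_{R_2}$ and $\K_R\to\delta$ as $R\to 0$.

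The main obstacle is the sharp support identity $\supp(\K_R)=[0,R\,\T]$, which I would isolate as the last step. For finite $\T$ the function $\K_1$ is compactly supported in $[0,\T]$, so by Paley--Wiener--Schwartz $\hat\K_1$ extends to an entire function of exponential type governed by $\T$; combined with the non-vanishing from~(\ref{EssProp}), $\alpha$ is entire and $\hat\K_R=e^{-\alpha R}$ is entire of exponential type growing linearly in $R$. The conclusion then follows from Proposition~\ref{prop:alpha2} in the Appendix, exactly as invoked in the proof of Proposition~\ref{prop:Semigroup2}. The delicate point is precisely the passage from ``entire of exponential type $R\,\T$ together with causality'' to the \emph{two-sided} support $[0,R\,\T]$ (rather than $[-R\,\T,R\,\T]$), which is what the appendix result is designed to deliver; for $\T=\infty$ this support claim is understood in the sense of the convention $[0,\T]=[0,\infty)$, and only the construction of the preceding paragraph is required.
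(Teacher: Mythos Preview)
The paper gives no explicit proof of this theorem: it is stated after the words ``In summary, we have'' and is meant to be read as the common abstraction of Propositions~\ref{prop:alpha1} and~\ref{prop:Semigroup2}, with the non-vanishing condition~(\ref{EssProp}) now assumed rather than derived. Your proposal spells out exactly this intended argument --- the $L^1$ bound gives $|\hat\K_1|\leq 1$, property~(\ref{EssProp}) gives $|\hat\K_1|>0$, the dissipation law is built as in Proposition~\ref{prop:alpha1}, and the support identity is delegated to Proposition~\ref{prop:alpha2} --- so it matches the paper's approach precisely.
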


In the two examples below, we shortly discuss two families of dissipative semigroups.

\begin{exam}\label{exam:Instr}
Let $a_0:=\frac{1}{\t}$ with $\t>0$ and $\K_R(t) = a_0\,\chi_+^{R-1}(a_0\,t) \,e^{-a_0\,t}$ 
for $R>0,\,t\in\R$, where (cf. Section~3.2 in~\cite{Ho03})
$$
  \;\chi_+^r(t) := \left\{ \begin{array}{ll}
                  \frac{t^r}{\Gamma(r+1)} \;  &  \; \mbox{for} \quad  t>0\\
                      0                   \;  &  \; \mbox{for} \quad  t\leq 0 
                 \end{array} \right. 
     \qquad \mbox{and}\qquad   r \in (-1,\infty)\,.
$$
Then $\K_R\in L^1(\R)$ for $R>0$ (cf. Exercise 6.3.18 (iii) in~\cite{St99}) and $\K_R\in L^2(\R)$ 
for $R>\frac{1}{2}$.  Moreover, let $G$ be defined as in~(\ref{modelG}). 
In Fig.~\ref{fig:ExInstr01}, we have visualized the qualitative behaviour of the "oscillations" $\K_R$ 
for various values of $R$. We see 
\begin{itemize}
\item [a)] if $R\in (0,1)$ then $\K_R$ has a pole at $t=0$ and is convex\footnote{This is not a proper  oscillation.},  

\item [b)] if $R=1$ then $\K_R(0+)$ exists and $\K_R$ is positive and convex, and   

\item [c)] if $R>1$ then $\K_R(0+)=0$, $\K_R$ is non-negative and has (exactly) one maximum at $t = (R-1)\,\t$. 
\end{itemize}
Thus there are three different "types" of oscillations depending on the distance $R$ from the origin of the wave 
which obey the dissipation law $\alpha/R$. 
\begin{figure}[!ht]
\begin{center}
\includegraphics[height=5.0cm,angle=0]{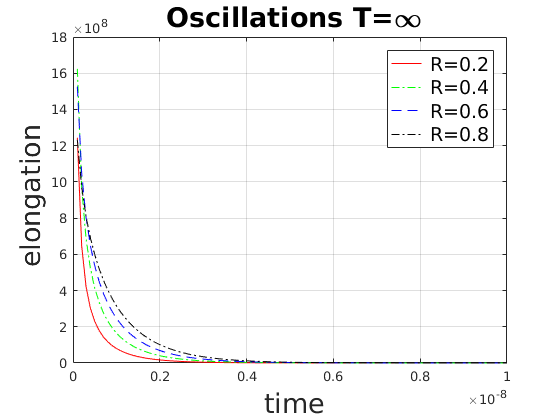}
\includegraphics[height=5.0cm,angle=0]{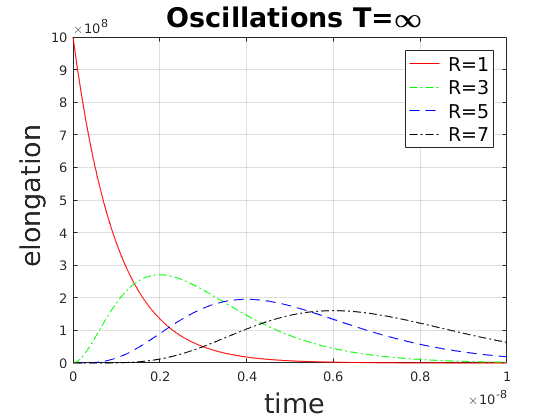}
\includegraphics[height=5.0cm,angle=0]{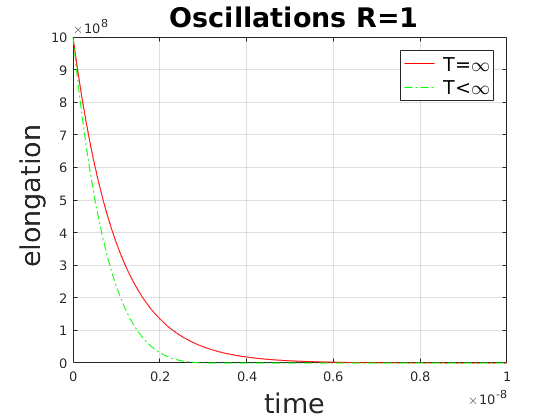}
\includegraphics[height=5.0cm,angle=0]{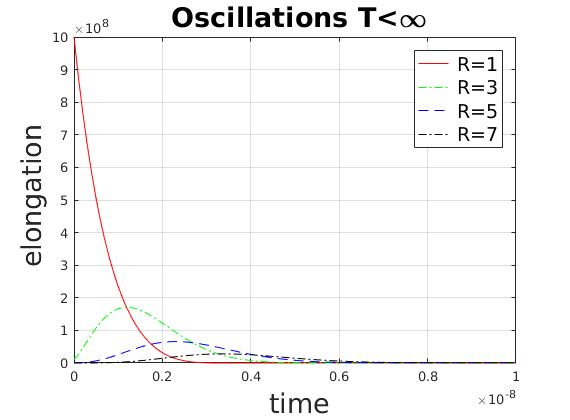}
\end{center}
\caption{Visualisation of the "oscillations" $\K_R$ from Example~\ref{exam:Instr} ($\T=\infty$) and 
Example~\ref{exam:L} ($\T<\infty$) with $m=2$ for $R\in\{ 0.2,\,0.4,\,0,6,\,0.8 \}$ and 
$R\in\{ 1,\,3,\,5,\,7 \}$, respectively. As parameters, we used $\t=10^{-9}\,\,\mbox{s}$ and 
$c_\infty=1481\,\frac{ \mbox{m} }{ \mbox{s} }$. }
\label{fig:ExInstr01}
\end{figure}
\end{exam}

Let us now present an example of a relaxation function with support in $[0,\T]$ that can be used to model oscillations with finite stropping time $\T$. 

\begin{exam}\label{exam:L}
For example, let $\rho$ be the function $\K_1$ from Example~\ref{exam:Instr} and $\K_1$ be defined by 
\begin{equation*}
     \K_1 (t) 
          := \rho(t)\,\frac{ 1 - \sum_{m=0}^k \frac{ \rho^{(m)}(\T) }{ \rho(t)  }\,\frac{(t-\T)^m}{m!} }
                           { 1 - \sum_{m=0}^k \frac{ \rho^{(m)}(\T) }{ \rho(0+) }\,\frac{( -\T)^m}{m!} }\, 
                  \chi_{[0,\T]}(t) \,,
\end{equation*}
which is $k$-times continuously differentiable on $(0,\infty)$ and has support in $[0,T]$. 
\end{exam}

\begin{exam}\label{exam:Instr2}
Let us shortly consider the semigroup defined by
$$
 \K_R(t) := \frac{t^{R-1}}{\t_0^R\,\Gamma(R)}\,\exp\left( -\frac{t}{\t_0} \right)\,\cos(\omega_0\,t) 
   \qquad\mbox{for}\qquad R\geq 0\,,
$$
where $\t_0$ and $\omega_0$ are positive constants. For $\omega_0=0$ the oscillation $\K_R$ reduces 
to the oscillation $\K_R$ from the previous example. In contrast to the previous examples, the set of 
extrem values of $\K_R$ is countable. Similarly as in Example~\ref{exam:Instr}, $\K_R$ has a pole at 
$t=0$ for $R<1$ and $\K_R$ has no pole for $1 < R$. 
\end{exam}

\section{Conclusion}

In this paper we performed an analysis of (normalized) second order odes with nonconstant coefficients 
and derive a solution formular for this type of problem. With this formula we get a much better 
understanding of dissipative oscillation and their respective models. The essential part of this formular 
is the existence of a unique (nonnegative) frequency function that satisfies an integro differential 
equation. If this integro differential equation is fully understood, then dissipative oscillation are also 
fully understand. Therefore we think that this equation should be analysed in detail by 
scientists working in the field of integro differential equation or similar fields. 
Moreover, we demonstrated that the formula is very useful to model or design dissipative oscillations and 
dissipative waves. As a novelty, we modeled dissipative waves that 
cause oscillations in each space point that stop after a finite time period.

\section{Appendix: Two applications of the Paley-Wiener-Schwartz Theorem}

For convenience, we summarize some notation about the Fourier transform and present a small application 
of the Paley-Wiener-Schwartz Theorem that is used in our analysis of dissipative waves with 
frequency dependent attenuation laws (cf. ~\cite{GaWi99,Ho03,DaLi92_5}).

We use the following form of the Fourier transform of an $L^1-$function $f$ 
$$
     \F(f)(\omega) := \hat f(\omega) := \int_\R f(t)\, e^{\i\,2\,\pi\,\omega\,t}\,\d t
     \qquad\mbox{for}\qquad \omega\in\R\,.
$$
Then the Convolution Theorem for $L^1-$functions reads as follows
$$
      \F(f *_t g) = \F(f)\,\F(g)
$$
and, if $f\in L^1(\R)$ is differentiable, then 
$$
         \F(f')(\omega) = (-\i\,2\,\pi\,\omega)\,\F(f)(\omega) \qquad\mbox{for}\qquad \omega\in\R\,.
$$
The inverse Fourier transform is denoted by $\check f$ and $\F^{-1}(f)$. 
In order to prove that a tempered distribution has support in $[0,\T]$ for some $\T\in(0,\infty]$, 
the following Theorem is essential.

\begin{theo}[Paley-Wiener-Schwartz] \label{theo:PWS}
$f\in \mathcal{S}'(\R)$ has support in $[0,\T]$ if and only if
\begin{itemize}
\item [(C1)] $z\in\C \mapsto \hat f(z)$ is entire and 

\item [(C2)] there exist constants $C$ and $N$ such that
$$
   |f(z)| \leq C\,(1+|z|)^N\,\exp\{ \sup_{t\in [0,\T]} (t\,\Im(z)) \}  \qquad \mbox{for}\qquad z\in\C\,.
$$
\end{itemize}
\end{theo}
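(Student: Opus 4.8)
The plan is to prove both implications separately, treating this as the classical one-dimensional Paley--Wiener--Schwartz theorem in which the convex compact set is the interval $[0,\T]$ and the quantity $\sup_{t\in[0,\T]}(t\,\Im(z))$ plays the role of its supporting function. Throughout I would use the paper's convention, so that the complex extension of the Fourier transform is the pairing $\hat f(z)=\langle f,\,e^{\i\,2\,\pi\,z\,t}\rangle$ of $f$ against the entire function $t\mapsto e^{\i\,2\,\pi\,z\,t}$; the exponential factor in (C2) will arise precisely from $\sup_{t\in[0,\T]}|e^{\i\,2\,\pi\,z\,t}|$, which is the supporting function of the support (matching (C2) up to the normalisation and sign conventions). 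Here $|f(z)|$ in (C2) is of course read as $|\hat f(z)|$.

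For the necessity direction ($\supp f\subset[0,\T]\Rightarrow$ (C1),(C2)) I would first recall that a compactly supported distribution has finite order, say $N$, so that for every fixed open neighbourhood $U$ of $[0,\T]$ there is a constant $C$ with $|\langle f,\varphi\rangle|\leq C\sum_{k\leq N}\sup_{t\in U}|\varphi^{(k)}(t)|$ for all test functions $\varphi$. Since $e^{\i\,2\,\pi\,z\,t}$ becomes smooth and compactly supported after a cutoff outside $U$, this pairing defines $\hat f(z)$, and entirety (C1) follows by differentiating under the pairing (the difference quotients converge in the test-function space) or by Morera's theorem. For the bound (C2) I would insert $\varphi(t)=e^{\i\,2\,\pi\,z\,t}$ into the seminorm estimate: each derivative produces a factor $(\i\,2\,\pi\,z)^k$, yielding the polynomial factor $(1+|z|)^N$, while $|e^{\i\,2\,\pi\,z\,t}|=e^{-2\,\pi\,t\,\Im(z)}$ supplies, after taking the supremum over the support, the supporting-function factor $\exp\{\sup_{t\in[0,\T]}(t\,\Im(z))\}$ up to the normalisation constant.

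For the sufficiency direction ((C1),(C2)$\Rightarrow\supp f\subset[0,\T]$) I would proceed as follows. Restricting (C2) to the real axis gives $|\hat f(\xi)|\leq C(1+|\xi|)^N$, so $\hat f|_\R$ is a tempered distribution and $f:=\F^{-1}(\hat f)\in\mathcal{S}'(\R)$ is well defined. To localise the support I would regularise: choose a mollifier $\chi_\epsilon$ supported in $[-\epsilon,\epsilon]$ and set $f_\epsilon:=f*\chi_\epsilon$, so that $\hat f_\epsilon=\hat f\,\hat\chi_\epsilon$ is again entire, now rapidly decreasing in the horizontal direction, and satisfies the Paley--Wiener bound with $[0,\T]$ enlarged to $[-\epsilon,\T+\epsilon]$. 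Since $f_\epsilon$ is an ordinary smooth function, I can write it as an absolutely convergent inverse Fourier integral and shift the contour $\xi\mapsto\xi+\i\,\eta$ by Cauchy's theorem. Choosing the sign of $\eta$ so that the kernel factor $e^{2\,\pi\,\eta\,t}$ and the supporting-function factor compete destructively, one direction forces $f_\epsilon(t)=0$ for $t<-\epsilon$ and the opposite direction forces $f_\epsilon(t)=0$ for $t>\T+\epsilon$. Hence $\supp f_\epsilon\subset[-\epsilon,\T+\epsilon]$, and letting $\epsilon\to0$ yields $\supp f\subset[0,\T]$.

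The main obstacle, as usual in this theorem, is making the contour shift in the sufficiency direction fully rigorous at the level of distributions rather than functions: one must justify the mollification step so that $f_\epsilon$ is genuinely a function with an absolutely convergent inverse Fourier integral, verify that multiplication by $\hat\chi_\epsilon$ enlarges the supporting function only by $O(\epsilon)$, and control the estimates uniformly enough that the limit $\epsilon\to0$ recovers exactly $[0,\T]$ and not a larger interval. A secondary but related subtlety occurs in the necessity direction, where the seminorm estimate must be applied on a neighbourhood $U=(-\delta,\T+\delta)$ and then $\delta\to0$ taken after the estimate, so that the supremum produces the supporting function of $[0,\T]$ itself rather than of a slightly larger interval.
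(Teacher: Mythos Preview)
The paper does not prove this statement at all: it simply quotes the Paley--Wiener--Schwartz theorem as a classical result, with references to \cite{GaWi99,Ho03,DaLi92_5}, and then applies it in Proposition~\ref{prop:alpha2}. So there is no ``paper's own proof'' to compare your proposal against.

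That said, your outline is the standard textbook argument (essentially the one in H\"ormander \cite{Ho03}): compact support gives finite order, hence the seminorm estimate yields (C1) and (C2); conversely, mollify, shift the contour using the entire extension and the growth bound, and let the mollification parameter tend to zero. The subtleties you flag --- shrinking the neighbourhood $U$ in the necessity direction, and controlling the $\epsilon$-enlargement of the supporting function in the sufficiency direction --- are exactly the points where care is needed, and you have identified them correctly. One small remark: in the paper's sign convention $\hat f(z)=\langle f, e^{\i 2\pi z t}\rangle$, so $|e^{\i 2\pi z t}|=e^{-2\pi t\,\Im(z)}$, and the supporting-function factor in (C2) should carry a $2\pi$ and a sign; the paper's statement of (C2) is already loose on this point (and writes $|f(z)|$ where $|\hat f(z)|$ is meant), so your reading of it is appropriate.
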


Let $\alpha$ be a dissipation law (of a dissipative wave) and $\K_R$ be defined as in~(\ref{modelKR}). 
If $\supp(\K_1) = [0,\T]$ for some finite $\T>0$, then it is clear that $\supp (\K_R *_t \K_R) = [0,2\,T]$ 
holds. Now we show that the structure of the semigroup $(\K_R)_{R\geq 0}$ implies 
\begin{equation}\label{propKR}
    \supp (\K_R) = [0,R\,T]  \qquad\mbox{for}\qquad R\geq 0 \,.
\end{equation}

\begin{prop}\label{prop:alpha2}
Let $\K_1$ and $\alpha$ be as in Proposition~\ref{prop:alpha1} with the additional assumption 
$\supp\left( \K_1\right) = [0,\T]$ for some $\T\in (0,\infty)$. Then $\K_R$ defined as in~(\ref{modelKR}) 
satisfies~(\ref{propKR}).  
\end{prop}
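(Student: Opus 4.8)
The plan is to apply the Paley--Wiener--Schwartz Theorem (Theorem~\ref{theo:PWS}) in both directions. First I would use the hypothesis $\supp(\K_1)=[0,\T]$ to read off, via Theorem~\ref{theo:PWS}, that $z\mapsto\hat\K_1(z)$ is entire and that there are constants $C,N$ with $|\hat\K_1(z)|\le C\,(1+|z|)^N\,\exp\{\sup_{t\in[0,\T]}(t\,\Im(z))\}$ for all $z\in\C$. The goal is then to establish the two conditions (C1) and (C2) of Theorem~\ref{theo:PWS} for $\hat\K_R$ with the interval $[0,R\,\T]$, since these imply $\supp(\K_R)\subseteq[0,R\,\T]$. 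The key algebraic fact I would exploit is that, by the definition~(\ref{modelKR}) and Proposition~\ref{prop:alpha1}, we have $\hat\K_R=e^{-\alpha\,R}$ with $\alpha=-\log\hat\K_1$, so that on the real axis $|\hat\K_R|=e^{-R\,\Re(\alpha)}=|\hat\K_1|^R$, and this identity persists for the holomorphic extension.

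The technical heart is condition (C2), which I would obtain by a simple rescaling. Raising the growth bound for $\K_1$ to the power $R\ge 0$ and substituting $s=R\,t$, which converts $R\,\sup_{t\in[0,\T]}(t\,\Im(z))$ into $\sup_{s\in[0,R\,\T]}(s\,\Im(z))$, yields
\begin{equation*}
   |\hat\K_R(z)| \;=\; |\hat\K_1(z)|^R \;\le\; C^R\,(1+|z|)^{N\,R}\,
       \exp\Big\{ \sup_{s\in[0,R\,\T]}(s\,\Im(z)) \Big\}\,,
\end{equation*}
which is exactly (C2) for the interval $[0,R\,\T]$; the polynomial prefactor $(1+|z|)^{N\,R}$ is harmless.

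The hard part is condition (C1), namely that $\hat\K_R=e^{-R\,\alpha}$ extends to an \emph{entire} function. This is precisely where the finiteness $\T<\infty$ is essential and where the genuine obstacle lies: it is not enough that $\hat\K_1$ be entire, one needs $\alpha=-\log\hat\K_1$ to continue to an entire function, which forces $\hat\K_1$ to be zero-free on all of $\C$ and requires a single global branch of the logarithm so that $e^{-R\,\alpha}$ is well defined for non-integer $R$. Proposition~\ref{prop:alpha1} only supplies $|\hat\K_1|>0$ on the real axis, so the potential complex zeros of the exponential-type function $\hat\K_1$ must be excluded (or the branch-point difficulty circumvented) before (C1) holds. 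Granting this entirety of $\alpha$ (as asserted in the note following Proposition~\ref{prop:Semigroup2}), conditions (C1) and (C2) together give, via Theorem~\ref{theo:PWS}, the inclusion $\supp(\K_R)\subseteq[0,R\,\T]$.

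Finally I would upgrade this inclusion to the equality~(\ref{propKR}). Writing $b(R):=\sup\,\supp(\K_R)$ and $a(R):=\inf\,\supp(\K_R)$, the semigroup identity $\K_{R_1+R_2}=\K_{R_1}*_t\K_{R_2}$ together with the Titchmarsh convolution theorem gives the additivity $b(R_1+R_2)=b(R_1)+b(R_2)$ and $a(R_1+R_2)=a(R_1)+a(R_2)$, while the integer case $\K_n=\K_1^{*n}$ anchors $b(1)=\T$ and $a(1)=0$. Combining additivity with the bounds $0\le a(R)$ and $b(R)\le R\,\T$ just obtained, the nonnegative additive function $R\mapsto R\,\T-b(R)$ vanishes at $R=1$, hence vanishes on the rationals and, by monotonicity of the support in $R$, vanishes identically; likewise $a\equiv 0$. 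This yields $\supp(\K_R)=[0,R\,\T]$ for all $R\ge 0$.
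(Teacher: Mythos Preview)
Your approach is the paper's: apply the Paley--Wiener--Schwartz Theorem to $\hat\K_1$, then raise the resulting growth estimate to the power $R$ via $|\hat\K_R|=|\hat\K_1|^R$ and the rescaling $R\,\sup_{t\in[0,\T]}(t\,\Im z)=\sup_{t\in[0,R\T]}(t\,\Im z)$; this is exactly the content of the paper's identity~(\ref{helpPWtheo}). Where you go beyond the paper is in being more scrupulous on two points that the paper leaves implicit. First, you correctly isolate the nontrivial issue behind (C1): the paper simply asserts (in the line after Proposition~\ref{prop:Semigroup2} and implicitly here) that $\alpha$ is entire because $\supp(\K_1)=[0,\T]$, but, as you note, compact support only makes $\hat\K_1$ entire, and one still needs $\hat\K_1$ to be zero-free on all of $\C$ to get a single-valued $\alpha=-\log\hat\K_1$ and hence a well-defined entire $\hat\K_R$ for non-integer $R$. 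Second, the paper's proof stops at the growth bound and declares $\supp(\K_R)=[0,R\T]$, whereas Theorem~\ref{theo:PWS} only yields the inclusion $\supp(\K_R)\subseteq[0,R\T]$; your additional Titchmarsh-based additivity argument for the endpoints $a(R)$ and $b(R)$ is a genuine addition that upgrades the inclusion to the claimed equality~(\ref{propKR}).
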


\begin{proof} 
Let $\beta_1$ and $\beta_2$ be such that $\alpha(z) = \beta_1(z) + \i\,\beta_2(z)$ with $\beta_1(z),\beta_2(z)\in\R$ 
for each $z\in\C$. Then we have $\left| e^{-\alpha(z)\,r} \right|  =  e^{ -\beta_1(z)\,R }$ with $R:=|\x|$. 
According to the Paley-Wiener-Schwartz Theorem (Theorem~\ref{theo:PWS} above), we have 
$\supp(\K_1) = [0,\T]$  if and only if 
(i) $z\in\C \mapsto \hat \K_1(z)$ is entire and 
(ii) there exist constants $C$ and $N$ such that
$$
   e^{-\beta_1(z)} 
    \leq C\,(1+|z|)^N\,\exp\{ \sup_{t\in [0,\T]} (t\,\Im(z)) \}  \qquad \mbox{for}\qquad z\in\C\,.
$$
Because of $\supp\left( \K_1\right) = [0,\T]$ property (i) holds. The previous estimation for 
$e^{-\beta_1(z)}$ together with
\begin{equation}\label{helpPWtheo}
     e^{ -\beta_1(z)\,R } = \left( e^{ -\beta_1(z) } \right)^R 
          \qquad\mbox{and}\qquad
     R\,\sup_{t\in [0,\T]} (t\,\Im(z)) = \sup_{t\in [0,R\,\T]} (t\,\Im(z))
\end{equation}
for $R\geq 0$ implies 
$$
   e^{-\beta_1(z)\,R} 
    \leq C\,(1+|z|)^{R\,N}\,\exp\{ \sup_{t\in [0,R\,\T]} (t\,\Im(z)) \}  \qquad \mbox{for}\qquad z\in\C\,,
$$
i.e. $\supp(\K(R,\cdot)) = [0,R\,\T]$, which proves the claim. 
\end{proof}

\end{document}